\documentclass{article}
 \setlength{\oddsidemargin}{11pt}
    \setlength{\textwidth}{410pt}
\usepackage{graphicx} 
\usepackage{amsthm, amsmath, amssymb}
\usepackage{titling}
\usepackage{xcolor}
\usepackage{enumitem}
\usepackage{hyperref}

\newtheorem{theorem}{Theorem}
\newtheorem*{maintheorem}{Main Theorem}

\newtheorem{lemma}{Lemma}
\newtheorem{remark}{Remark}

\newtheorem{corollary}{Corollary}

\newtheorem*{cor}{Corollary}
\newtheorem{example}{Example}

\newcommand{\mc}[1]{\mathcal{#1}}
\newcommand{\mrm}[1]{\mathrm{#1}}
\newcommand{\tit}[1]{\textit{#1}}
\newcommand*\samethanks[1][\value{footnote}]{\footnotemark[#1]}

\title{A Completion Result\\ for Partial Affine and Inversive Spaces}
\author{Cassie Grace \thanks{School of Mathematics and Statistics, University of Canterbury, Private Bag 4800, 8140 Christchurch, New Zealand.}
\and Klaus Metsch \thanks{Mathematisches Institut, Justus-Liebig-Universit\"{a}t Gie{\ss}en, Arndtstra{\ss}e 2, 35392 Gie{\ss}en, Germany.} \and Geertrui Van de Voorde \samethanks[1] \thanks{Corresponding author} }

\date{}

\begin{document}

\maketitle

\begin{abstract}
    A {\em partial affine plane} of order $n$ is a point-line incidence structure with $n^2$ points and $n$ points on each line, such that every two lines meet in at most one point.
    In this paper, we show that a partial affine plane of order $n$, $n\geq 19$, in which parallelism is an equivalence relation, containing more than $n^2-\sqrt{n}$ lines, can be completed to an affine plane. This bound is tight if $n$ is a square of a prime power.
    We also show that a partial affine plane of order $n$, $n\geq 49$, in which parallelism is an equivalence relation and there is no point not lying on any line, containing more than $n^2-f$ lines, where $f(f+1)=2n$, can be completed to an affine plane. These results improve on the $40$-year old bound of \cite{dow1986completion}. 
    
    Furthermore, we derive a higher-dimensional result about the completion of $2$-$(n^d,n,1)$-designs, as well as for partial {\em inversive spaces}. 
    In particular, we show that a partial $3$-$(n^2+1,n+1,1)$-design for which in every derived structure, parallelism is an equivalence relation, and there are at least $n^2+n-\sqrt{n}$ lines, can be completed to an inversive plane.
\end{abstract}

\bigskip

\noindent
{\bf MSC code:} 51E14, 51E30, 05B05\\
{\bf Keywords:} partial affine plane, partial design, partial inversive plane, completion problem, embedding problem

\section{Introduction}
One of the central and natural problems in finite geometry and design theory asks under which conditions a {\em partial} structure (projective plane, $t$-design, generalised quadrangle...) can be extended to the full structure.
This problem is well-studied for projective planes, as well as for certain partial affine planes called {\em nets} (see later), but not nearly as much is known for {\em partial affine planes}. 

An \tit{affine plane} is an incidence structure of points and lines such that through any two points there is an unique line, for any line $\ell$ and point $P\notin \ell$ there is a unique line through $P$ disjoint from $\ell$, and there exist three points which are not collinear. An affine plane of {\em order} $n$ has $n^2$ points and $n$ points on each line. 
Two lines $\ell,m$ are \tit{parallel} if $\ell=m$ or $\ell\cap m=\emptyset$. It is clear that parallelism is an equivalence relation on the set of lines in an affine plane, where equivalence classes (called {\em parallel classes}) have size $n$.

An affine plane of order $n$ is an example of a \tit{net} of order $n$: a net of order $n$ is a point-line incidence structure points with $n^2$ points and $n$ points on each line such that any two lines have at most $1$ point in common, parallelism is an equivalence relation, and each parallel class has size $n$ (note that we do not ask for any two points in a net to be contained in a common line). The problem of determining under which conditions a net may be embedded in an affine plane is well studied \cite{bruck1963finite,metsch1991improvement}, but it is important to note that these results do not immediately extend to similar partial geometries in which parallel classes are not complete.


A \tit{partial projective plane} of order $n$,  $n\geq 2$, is a point-line geometry $(\mc{S},\mc{A})$ where $\mc{S}$ is a set of $n^2+n+1$ points, and $\mc{A}$ is a set of lines (which are subsets of $\mc{S}$), such that each line has $n+1$ points and any two lines meet in at most one point. A partial projective plane with $n^2+n+1$ lines is a \tit{projective plane} of order $n$. A partial projective plane  $(\mc{S},\mc{A})$ is {\em embeddable} in a projective plane (design) if there exists a projective plane  (design) $(\mathcal{S},\mc{A'})$ such that $\mathcal{A}\subseteq \mathcal{A'},$ and we also say that this partial projective plane (design) can be {\em completed} to a full projective plane. More generally, a design $(\mc{S},\mc{A})$ is embeddable in a (full) design $(\mc{S}',\mc{A}')$ if $\mc{S}\subseteq \mc{S}'$ and for every line $L\in \mc{A}$, there is a line $L'\in \mc{A}'$ such that $L=L'\cap S$.

Partial projective planes have been extensively researched and it is known that a maximal non-embeddable partial projective plane has $n^2+1$ lines. Furthermore, such non-embeddable partial projective planes have been classified \cite{metsch1994classification}. If a partial projective plane has no disjoint lines, then this result can be improved. This problem  was first studied by Dow 
and subsequently improved by Metsch, leading to the following result.  

\begin{theorem}\label{DOW bound}
    A partial projective plane of order $n$, $n\geq 2$, with no disjoint lines and with more than either
    \begin{itemize}
    \item \cite{dow1983improved} $g_1(n)=n^2-1$, or
    \item \cite{dow1983improved}  
    $g_2(n)=n^2-2\sqrt{n+3}+6$, or
    \item \cite{klaus} $g_3(n)=n^2-n/6$, or
    \item \cite[Theorem 6.15]{klausbook} $g_4(n)=n^2-\frac{\sqrt{5}-1}{2}n+\frac{17}{\sqrt{5}}\sqrt{n}+1$
    \end{itemize}
    lines can be embedded in a projective plane of order $n$. 
\end{theorem}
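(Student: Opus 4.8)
The plan is to work with the \emph{deficiencies} of the points and to reconstruct the missing lines combinatorially. Write $b=|\mc{A}|$ and set $t=n^2+n+1-b$ for the number of lines we must add; the hypothesis $b>g_i(n)$ becomes an upper bound on $t$. Since no two lines are disjoint, any two lines meet in exactly one point, so the $r_P$ lines through a point $P$ partition the remaining points into classes of size $n$, giving $1+r_P n\le n^2+n+1$ and hence $r_P\le n+1$. Define the deficiency $d_P=n+1-r_P\ge 0$. A double count of the lines meeting a fixed line $\ell$ (each in exactly one point) gives $\sum_{Q\in\ell}(r_Q-1)=b-1$, and hence for every line $\ell$,
\[
  \sum_{Q\in\ell} d_Q = (n+1)^2-(b+n) = n^2+n+1-b = t .
\]
So the deficiencies on each line sum to exactly $t$. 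This single identity is the engine of the argument: when $t$ is small it forces the deficient points to be sparse and their deficiencies to be concentrated.

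Next I would identify what the missing lines must be. The lines of $\mc{A}$ through a fixed point $P$ cover $1+r_P n$ points, so exactly $n\,d_P$ points fail to be collinear with $P$; in any completion these must be distributed, $n$ at a time, among the $d_P$ missing lines through $P$. The goal is therefore to show that the \emph{non-collinearity graph} $G$ on the deficient points (adjacency $=$ not joined by a line of $\mc{A}$) decomposes, locally at each $P$, into exactly $d_P$ cliques of size $n$, and that these local decompositions are globally consistent, so that the maximal cliques of $G$ are precisely the $t$ missing lines, each of size $n+1$. Equivalently, one recovers the dual configuration formed by the deficient points and the missing lines.

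The heart of the proof — and the step I expect to be the main obstacle — is verifying this clique decomposition. One must rule out a deficient point being non-collinear with too many or too few points in an inconsistent pattern, and in particular rule out partial missing lines of size less than $n+1$ that cannot be completed. Here the bound on $t$ does the work: using $\sum_{Q\in\ell}d_Q=t$ together with convexity and counting estimates, one shows that two deficient points are non-collinear only when their deficiencies are large enough to account for a shared missing line, and that the implied missing lines pairwise meet in exactly one point. The crude bound $g_1(n)=n^2-1$ (so $t\le n+1$) is the easiest case, since then almost every point is full and the few missing lines are nearly forced; pushing to $g_2,g_3,g_4$ requires progressively finer control of points of higher deficiency and a more careful analysis of how $G$ can fail to be a clean union of cliques, which is precisely where the successive improvements of Dow and Metsch enter.

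Finally I would assemble the completion: having produced $t$ candidate missing lines of size $n+1$, I would check that adjoining them to $\mc{A}$ yields a structure in which every two points lie on a unique line and every two lines meet in exactly one point, that is, a projective plane of order $n$, completing the embedding.
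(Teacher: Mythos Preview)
The paper does not prove this theorem at all: Theorem~\ref{DOW bound} is stated as a known result from the literature, with each bound attributed by citation to Dow and to Metsch, and the paper simply uses it as a black box (notably in Lemma~\ref{affine_to_proj_completion} and Theorem~\ref{pap_n_minus_root_2n}). So there is no ``paper's own proof'' to compare against.

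As for your sketch on its own merits: the framework you set up is the right one and your key identity $\sum_{Q\in\ell}d_Q=t$ is correct and is indeed the engine in Dow's arguments for $g_1$ and $g_2$. But you have essentially stopped at the point where the real work begins. The phrase ``using convexity and counting estimates, one shows that two deficient points are non-collinear only when their deficiencies are large enough'' is not a proof; it is a hope. For $g_1$ (so $t\le n+1$) this hope can be realised fairly directly, and your outline is close to complete there. For $g_2$ one already needs a nontrivial structural argument about how deficient points can distribute along lines. For $g_3$ and especially $g_4$, the clique-decomposition step is genuinely hard: Metsch's proofs do not proceed by a simple refinement of the deficiency count but bring in additional machinery (in particular his line-recognition lemma for linear spaces) to force the maximal cliques of the non-collinearity graph to have the right size. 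Your proposal gives no indication of how to handle a deficient point of moderate deficiency whose non-neighbours might a priori split unevenly, which is exactly the configuration that blocks the naive argument once $t$ is of order $n$.

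In short: your outline is a faithful description of the \emph{shape} of the known proofs, but it is not itself a proof of any of the four bounds beyond the easiest, and the paper under review offers nothing to compare it to.
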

It can be checked that, for integers $n\geq 2$,
\begin{itemize}\item $\min(g_1,g_2,g_3,g_4)=g_1$ if $n\leq 6$; 
\item $\min(g_1,g_2,g_3,g_4)=g_2$ if $16\leq n\leq 56$;
\item $\min(g_1,g_2,g_3,g_4)=g_3$ if $7\leq n\leq 15$ and if $57\leq n\leq 288$;
\item $\min(g_1,g_2,g_3,g_4)=g_4$ if $n\geq 289$.
\end{itemize}

A \tit{partial affine plane} (PAP) of order $n$, $n\geq 2$, is a point-line geometry $(\mc{S},\mc{A})$ where $\mc{S}$ is a set of $n^2$ points, and $\mc{A}$ is a set of lines (which are subsets of $\mc{S}$), such that each line has $n$ points and any two lines meet in at most one point.

\begin{remark} The definition of a partial affine plane and a partial projective plane only deviate in the number of total points and the number of points on a line; no partial affine plane is a partial projective plane (and vice versa) since it is not possible to find integers $m,n$ such that $m+1=n$ and $m^2+m+1=n^2$.
\end{remark}

Affine and projective planes are particular types of designs. A {\em $t$-$(v,k,\lambda)$-design} is an incidence structure with $v$ {\em points} and $k$ element subsets of points called {\em blocks}, such that every subset of $t$ points lies in exactly $\lambda$ blocks. Similarly, a {\em partial $t$-$(v,k,\lambda)$-design} is an incidence structure of $v$ points and $k$ element subsets called {\em blocks} such that every subset of $t$ points lies in at most $\lambda$ blocks. In the case $t=2$ we call the blocks {\em lines}, and in the case $t=3$ we call the blocks {\em circles}. Note that a $2$-$(n^2,n,1)$ design is an affine plane, and a partial $2$-$(n^2,n,1)$ design is a PAP. We will show in Section \ref{section2} that certain partial $2$-$(n^d,n,1)$-designs can be completed (see Lemma \ref{no_pts_lss_than_n_lines}).

A {\em $k$-group divisible design of type $t^u$} is a partial design with $tu$ points, block size $k$, and a partition of the point set into $u$ groups of size $t$ such that any two points of the same group do not lie together in a block, and any two points of different groups lie in exactly one block.

Two lines in a partial affine plane $(\mc{S},\mc{A})$ are called {\em parallel} if they coincide or have no point in common. Note that being parallel does not necessarily define an equivalence relation on the set of lines $\mc{A}$. When it does, we call the equivalence classes under the parallelism relation {\em parallel classes}. Also note that it is not necessary for the lines of a parallel class to cover all points of $\mc{S}$. Two points in a PAP $(\mc{S},\mc{A})$, (or more generally, partial design) are said to be \tit{joined} if they lie on a common line and \tit{unjoined} otherwise. Let the number of lines in a partial affine plane be $b$, then $b\leq n^2+n$ with equality if and only if $(\mc{S},\mc{A})$ is an affine plane. The \tit{valency} of a point $P$ in a PAP is the number of lines containing $P$ and is denoted $val(P)$, clearly $val(P)\leq n+1$. The number of points unjoined to $P$ is $(n+1-val(P))(n-1)$. It is not too hard to see that if parallelism is an equivalence relation on the lines and if there are at most $n+1$ parallel classes in a PAP, one can add points to the PAP and extend it to a partial projective plane. In that case, we can use the previously mentioned results about the extension of projective planes (see Lemma \ref{affine_to_proj_completion}). 

Still under the hypothesis that parallelism is an equivalence relation, but not requiring that the number of parallel classes is bounded by $n+1$, Dow showed the following:
\begin{theorem}\cite[Theorem 2.1]{dow1986completion}\label{dowaffine}
    Let $(\mc{S},\mc{A})$ be a PAP of order $n$ with $b$ lines. If $b> n^2$ and parallelism is an equivalence relation on $\mc{A}$, then $(\mc{S},\mc{A})$ can be completed to an affine plane. 
\end{theorem}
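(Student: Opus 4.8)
The plan is to reduce the statement to the completion theory for partial projective planes recalled in Theorem~\ref{DOW bound}. The goal is to show that the hypothesis $b>n^2$ forces there to be exactly $n+1$ parallel classes; once this is known, the standard construction of adjoining a point at infinity to each parallel class (as in Lemma~\ref{affine_to_proj_completion}) produces a partial projective plane with no disjoint lines and $b+1$ lines, and the line bound $g_1$ then finishes the job. I expect the parallel-class count to be the real content of the proof, with everything downstream routine.

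First I would exhibit a point of full valency. Counting incidences gives $\sum_{P\in\mc{S}}val(P)=bn$, since each of the $b$ lines carries $n$ points. If every point satisfied $val(P)\le n$, this sum would be at most $n^2\cdot n=n^3$; but $bn>n^3$ because $b>n^2$, a contradiction. Hence some point $P$ has $val(P)=n+1$. By the identity $(n+1-val(P))(n-1)=0$ noted in the excerpt, such a $P$ is joined to every other point, and its $n+1$ lines $\ell_0,\dots,\ell_n$ pairwise meet only in $P$, so they represent $n+1$ distinct parallel classes $C_0,\dots,C_n$.

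The key step is to show that these are the \emph{only} parallel classes. Let $m$ be an arbitrary line. If $m$ passes through $P$, then $m$ is one of the $\ell_i$. Otherwise $m$ meets each $\ell_i$ in at most one point, and any two such intersection points are distinct (a common point would lie in $\ell_i\cap\ell_j=\{P\}$, forcing $m$ through $P$); since $m$ has only $n$ points, it cannot meet all $n+1$ lines through $P$, so it is disjoint from some $\ell_i$. Because parallelism is an equivalence relation, disjoint lines are parallel, whence $m\in C_i$. Thus every line lies in one of $C_0,\dots,C_n$, and there are exactly $n+1$ parallel classes.

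It then remains to assemble the pieces. Following Lemma~\ref{affine_to_proj_completion}, I would adjoin one new point at infinity to each of the $n+1$ parallel classes, place it on every line of its class, and add a line at infinity through these $n+1$ new points, obtaining a partial projective plane of order $n$ on $n^2+n+1$ points with $b+1$ lines. It has no disjoint lines: two extended original lines either already met or, being parallel, now share their point at infinity, and every original line meets the line at infinity; this is exactly where the equivalence-relation hypothesis is used, since it guarantees that two disjoint original lines genuinely share an infinity point. As $b>n^2$ yields $b+1\ge n^2+2>n^2-1=g_1(n)$, Theorem~\ref{DOW bound} completes this structure to a projective plane of order $n$, and deleting the line at infinity together with its points returns an affine plane of order $n$ whose lines restrict to those of $(\mc{S},\mc{A})$. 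The crux throughout is the third paragraph: extracting a full point from the incidence count and observing that a single full point already meets a representative of every parallel class.
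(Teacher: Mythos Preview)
Your proposal is correct and follows essentially the same route as the paper. The paper does not give a standalone proof of this cited theorem, but the case $b>n^2$ is handled in the first paragraph of the proof of Theorem~\ref{n2-1_completable}: an averaging argument produces a point of valency $n+1$, Lemma~\ref{n+1_pt_gives_n+1_parallel_classes} then gives exactly $n+1$ parallel classes, and Lemma~\ref{affine_to_proj_completion} finishes via the projective completion and Theorem~\ref{DOW bound}---precisely the three steps you carry out, with the lemma proofs inlined.
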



In Section \ref{section2} we improve on this result by showing the following Main Theorem (see Theorem \ref{n2-1_completable}, Theorem \ref{pap_n_minus_root_n} and Theorem \ref{pap_n_minus_root_2n}). 

\begin{maintheorem}  Let $g_i(n)$, $i=1,\ldots,4$, be as in Theorem \ref{DOW bound} and let $\min(g_1,g_2,g_3,g_4)=g$.
    Suppose that $\mc{I} = (\mc{S},\mc{A})$ is a partial affine plane of order $n$ that cannot be extended to an affine plane and suppose that parallelism is an equivalence relation. Then $|\mc{A}| \leq \mrm{max}(g(n)-1, n^2-\sqrt{n})$. Furthermore if $|A| > \mrm{max}(g(n)-1, n^2-f)$, where $f$ is the positive real number satisfying $f(f+1)=2n$, then $\mc{I}$ has a point of valency $0$ and can be embedded in a projective plane of order $n$. Moreover, if $|\mc{A}|=n^2-f$, then either $\mc{I}$ has a point of valency $0$ and can be embedded in a projective plane of order $n$, or there exists an $n$-group divisible design of type $(n-f)^{n+f+2}$ with $n^2-f$ points and lines.
\end{maintheorem}

\begin{remark}\label{rem:qs}
    It is easy to verify that $\mrm{max}(g(n)-1, n^2-\sqrt{n}) = n^2-\sqrt{n}$ when $n\geq 19$ and that $\mrm{max}(g(n)-1, n^2-f)=n^2-f$ when $n\geq 49$, and $f$ is the positive real number satisfying $f(f+1)=2n$. 
\end{remark}

We also give a construction of a partial affine plane of order $n$, where $n$ is a square, with $n^2-\sqrt{n}$ lines that cannot be extended to an affine plane (see Example \ref{ex:Baer}), which shows that the bound above is tight.

Noting that $f>\sqrt{2n}-1$, and using Remark \ref{rem:qs}, we can state the following simplified corollary.

\begin{cor}
    Let $n\geq 49$ and suppose that $\mc{I} = (\mc{S},\mc{A})$ is a partial affine plane of order $n$ in which parallelism is an equivalence relation. If  $\mc{I}$ has more than $n^2-\sqrt{n}$ lines, or $\mc{I}$ has at least $n^2-\sqrt{2n}+1$ lines and every point of $\mc{S}$ lies on at least one line of $\mc{A}$, then $\mathcal{I}$ can be extended to an affine plane of order $n$.
\end{cor}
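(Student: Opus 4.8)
The plan is to deduce this corollary directly from the Main Theorem, treating its two hypotheses as two separate cases and arguing by contradiction in each. Both reductions hinge only on the two elementary facts quoted just before the statement, namely Remark \ref{rem:qs} and the inequality $f>\sqrt{2n}-1$, so no new geometric work is required; the substance of the argument already lives in the Main Theorem, and the corollary is merely a convenient repackaging of it.

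First, for the case where $\mc{I}$ has more than $n^2-\sqrt{n}$ lines, I would suppose for contradiction that $\mc{I}$ cannot be extended to an affine plane. Since parallelism is an equivalence relation, the Main Theorem then gives $|\mc{A}|\leq \max(g(n)-1,\,n^2-\sqrt{n})$. Because $n\geq 49\geq 19$, Remark \ref{rem:qs} identifies this maximum as $n^2-\sqrt{n}$, so $|\mc{A}|\leq n^2-\sqrt{n}$, contradicting the hypothesis $|\mc{A}|>n^2-\sqrt{n}$. Hence $\mc{I}$ extends to an affine plane.

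Second, for the case where $\mc{I}$ has at least $n^2-\sqrt{2n}+1$ lines and every point lies on a line, I would again assume $\mc{I}$ is non-extendable. The key numerical step is $f>\sqrt{2n}-1$, which rearranges to $n^2-f<n^2-\sqrt{2n}+1\leq |\mc{A}|$; combined with Remark \ref{rem:qs} (again using $n\geq 49$), this yields $|\mc{A}|>n^2-f=\max(g(n)-1,\,n^2-f)$. The Main Theorem then forces $\mc{I}$ to contain a point of valency $0$, i.e.\ a point lying on no line, which contradicts the standing assumption that every point of $\mc{S}$ lies on at least one line of $\mc{A}$. So $\mc{I}$ extends to an affine plane, completing the argument.

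The only nontrivial item to verify is the inequality $f>\sqrt{2n}-1$, which I expect to be the sole (minor) obstacle. Setting $x=\sqrt{2n}-1$ gives $x(x+1)=x^2+x=2n-\sqrt{2n}<2n=f(f+1)$, and since $t\mapsto t(t+1)$ is strictly increasing for $t>0$ this forces $x<f$. Everything else is a direct invocation of the Main Theorem, so I would not anticipate any genuine difficulty beyond being careful that the ``cannot be extended to an affine plane'' hypothesis required by the Main Theorem is precisely what each contradiction hypothesis supplies.
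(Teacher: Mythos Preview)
Your proposal is correct and follows exactly the route the paper indicates: the paper does not give a separate proof of this corollary but simply notes, just before stating it, that it follows from the Main Theorem together with Remark~\ref{rem:qs} and the inequality $f>\sqrt{2n}-1$. Your two contradiction arguments and your verification of $f>\sqrt{2n}-1$ carry this out in full detail.
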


When we do not assume that parallelism is an equivalence relation, the results are comparatively weaker. In this case, Dow proved the following theorems, which we will use in Section \ref{section3}.

\begin{theorem}\label{DOW_pap_root_n_no_parallel}\cite[Theorem 3.2, Theorem 3.3]{dow1983improved}
    Let $(\mc{S},\mc{A})$ be a PAP of order $n$ with $b=n^2+n-e$ lines. If at least one of the following holds:
    \begin{itemize}
    \item[(i)] $e<\sqrt{n}+1$ and there is a point of valency $n+1-e$, or
    \item[(ii)] $e<\sqrt{n}$ and some line contains only points of valency $n+1$,
    \end{itemize} then $(\mc{S},\mc{A})$ can be completed to an affine plane.
\end{theorem}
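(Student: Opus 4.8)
The plan is to reformulate completion as a partition problem and then to pin down the structure of the missing lines so precisely that only a single combinatorial fact—transitivity of the unjoined relation on the set of deficient points—remains to be established. Write $b=n^2+n-e$, so exactly $e$ lines are missing, and call a point \emph{saturated} if $val(P)=n+1$. Since a saturated point lies on $n+1$ lines covering $(n+1)(n-1)=n^2-1$ other points, \emph{a saturated point is joined to every other point}; hence every unjoined pair consists of non-saturated points, and in any completion each missing line is an $n$-set of pairwise unjoined points. In case (i) the point $P$ of deficiency $e$ must lie on all $e$ missing lines (it needs $e$ new lines and only $e$ exist), so the missing lines form a \emph{pencil} through $P$; in case (ii) no missing line can meet the saturated line $\ell$ without raising a valency to $n+2$, so in any completion every missing line is parallel to $\ell$ and the missing lines complete the parallel class of $\ell$. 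In both cases the missing lines are therefore forced up to the partition of the deficient points, and completion is equivalent to exhibiting that partition.

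For case (ii) this structure can be extracted cleanly from the hypothesis. Because every point of $\ell$ is saturated, every point off $\ell$ is joined to all $n$ points of $\ell$, so it lies on $n$ distinct lines meeting $\ell$ and hence on at most one line parallel to $\ell$; consequently the lines parallel to $\ell$ are pairwise disjoint, there are $n-1-e$ of them, and together with $\ell$ they form $n-e$ disjoint lines whose points are all saturated. The remaining, \emph{uncovered}, points form a set $W$ with $|W|=en$ on which the whole deficiency is concentrated, each point of $W$ having valency exactly $n$ and being unjoined to exactly $n-1$ points, all in $W$; thus the unjoined relation restricts to an $(n-1)$-regular graph on $W$. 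A short double count (each point of $W$ lies on $n$ transversals, giving $\sum_h w(h)=en^2$ over the $n^2$ lines $h$ meeting $\ell$, while the number of unjoined pairs in $W$ is fixed at $\tfrac12 en(n-1)$) forces $\sum_h\binom{w(h)}{2}$ to attain its minimum, so \emph{every} transversal meets $W$ in exactly $e$ points. Thus $W$ carries a packing: $en$ points, $n^2$ blocks of size $e$ (the transversal traces), any two points in at most one block, each point in $n$ blocks, and two points are joined precisely when they share a block. The missing lines must be $e$ blockwise-transversal $n$-sets, so completion is equivalent to the unjoined relation on $W$ being an equivalence relation with classes of size $n$.

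Case (i) runs along the same lines but with an extra opening move. Here the hypothesis supplies leverage at a single point $P$ rather than along a whole line, so the concentration of deficiency is no longer automatic: one must first prove, already using $e<\sqrt n+1$, that every point joined to $P$ is saturated and that every point of $U:=N(P)$ (with $|U|=e(n-1)$) has deficiency exactly one. Granting this, each point of $U$ is unjoined to $P$ and to exactly $n-2$ further points, and completion is equivalent to the unjoined relation on $U$ being an equivalence relation with classes of size $n-1$, each class together with $P$ forming one line of the pencil.

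In both cases the crux—and the main obstacle—is the transitivity step, which is exactly where the bound enters. Transitivity is equivalent to the statement that no two \emph{joined} deficient points have a common \emph{unjoined} neighbour (if $\mu=0$ in this sense, then adjacency-or-equality is transitive and the unjoined graph is a disjoint union of cliques). I would prove $\mu=0$ by a global double count of the ``bad cherries''—triples $(X;Y,Y')$ with $Y,Y'$ unjoined to $X$ but joined to each other—against the transversals (respectively pencil lines): expressing, for a fixed joined pair, the number of common unjoined neighbours in terms of the number of pairs of lines through the two points that meet off their common line, and summing, produces an inequality in which the admissible ``slack'' is of order $n$ while a single bad cherry would already cost order $e^2$. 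Hence for $e<\sqrt n$ (respectively $e<\sqrt n+1$, the extra unit of slack coming from the common point $P$ of the pencil) no bad cherry can occur, $\mu=0$, and the required equivalence relation exists. It remains to add the resulting $n$-sets (respectively $(n-1)$-sets together with $P$) as new lines and to check the affine axioms: every two points now lie on a unique common line—joined pairs through an old line, unjoined pairs through the new class they share—every point attains valency $n+1$, and three non-collinear points plainly exist, so the completed structure is an affine plane. This verification is routine; the entire difficulty is concentrated in the counting that forces $\mu=0$.
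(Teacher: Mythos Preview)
The paper does not contain a proof of this statement: Theorem~\ref{DOW_pap_root_n_no_parallel} is quoted from Dow's 1983 paper \cite{dow1983improved} and is used only as a black box in Corollary~\ref{root_n_completion_of_inversive_plane}. There is therefore no ``paper's own proof'' to compare your proposal against.

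On the substance of your outline: the reduction you describe is sound and is indeed the route Dow takes. In case~(ii) your computation that every transversal meets the deficiency set $W$ in exactly $e$ points is correct (the variance collapses because the number of joined pairs in $W$ equals $n^{2}\binom{e}{2}$, the convexity minimum). However, you explicitly defer the decisive step---showing that the unjoined relation on $W$ (respectively on $U\cup\{P\}$ in case~(i)) is transitive---to an unspecified ``global double count of bad cherries,'' and you likewise leave unproved the preliminary claim in case~(i) that every point joined to $P$ is saturated. These are precisely the places where the hypotheses $e<\sqrt{n}$ and $e<\sqrt{n}+1$ do real work, and your description (``slack of order $n$ versus cost of order $e^{2}$'') is a heuristic, not an argument. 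As written, the proposal is a correct plan with the main lemma missing; to make it a proof you must actually execute the count that forces $\mu=0$, and in case~(i) first establish that the deficiency is concentrated on $N(P)$.
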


\begin{theorem}\label{DOW_pap_with_nothing_2_missing}\cite[Theorem 4.1]{dow1983improved}
    Let $(\mc{S},\mc{A})$ be a PAP of order $n$ with $b$ lines. If at least one of the following holds:
    \begin{itemize}
    \item[(i)] $n\geq 2$ and $b=n^2+n-1$, or \item[(ii)] $n\geq 4$ and $b=n^2+n-2$,\end{itemize} then $(\mc{S},\mc{A})$ can be completed to an affine plane.
\end{theorem}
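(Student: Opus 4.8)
The plan is to analyse the \emph{deficiency} $\delta(P)=n+1-val(P)$ of each point, where $e:=n^2+n-b\in\{1,2\}$. A double count of incidences gives $\sum_P\delta(P)=en$, and since the $val(P)$ lines through $P$ cover $val(P)(n-1)$ of the $n^2-1$ other points, $P$ is unjoined to exactly $(n-1)\delta(P)$ points; hence there are exactly $e\binom{n}{2}$ unjoined pairs. On the set of deficient points I would form the \emph{deficiency graph} $G$, whose edges are the unjoined pairs, so that $\deg_G(P)=(n-1)\delta(P)$ and $G$ has $e\binom{n}{2}$ edges. The reconstruction idea is that the missing lines are precisely the maximal unjoined cliques of $G$: an $n$-set of pairwise unjoined points spans an adjoinable line, because two of its points lying on a common existing line would be joined, so such a line meets every existing line in at most one point; adjoining it keeps the structure a partial affine plane, and reaching $n^2+n$ lines forces an affine plane.

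First I would settle $e=1$ (part (i)), which is clean and serves as the model. Here $\sum_P\delta(P)=n$, so $G$ has total degree $n(n-1)$. If some $Q$ had $\delta(Q)\ge 2$, then $Q$ would have at least $2(n-1)$ neighbours in $G$, each itself deficient of $G$-degree $\ge n-1$; counting $Q$ and its (distinct) neighbours gives total degree $\ge 2(n-1)+2(n-1)^2=2n(n-1)$, exceeding the true value $n(n-1)$. Hence every $\delta(P)\in\{0,1\}$, there are exactly $n$ deficient points, each of $G$-degree $n-1$, so $G=K_n$; these $n$ pairwise unjoined points span an adjoinable line and completion follows.

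For $e=2$ the same inequality gives $\delta(P)\le 2$, and the equality case pins down the two possible shapes of $G$: either (a) $2n$ points of deficiency $1$ with $G$ being $(n-1)$-regular, or (b) a unique point $Q$ of deficiency $2$ together with $2(n-1)$ deficiency-$1$ points, all lying in $N(Q)$, with the induced graph on $N(Q)$ being $(n-2)$-regular. The target structure is that $G$ is a disjoint union of two $n$-cliques in case (a), and two $n$-cliques glued along $Q$ in case (b). Granting this, for a deficiency-$1$ point $A$ the set $\{A\}\cup\{\,\text{points unjoined to }A\,\}$ has exactly $n$ points and is an unjoined clique, hence an adjoinable line; adjoining it lowers $b$ to $n^2+n-1$, and the $e=1$ case then finishes the argument. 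Alternatively, once the structure is known one may invoke Theorem~\ref{DOW_pap_root_n_no_parallel}: case (b) supplies a point of valency $n-1=n+1-e$ (hypothesis (i)), and case (a) supplies a line disjoint from all deficient points, i.e.\ a line all of whose points have valency $n+1$ (hypothesis (ii)).

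The hard part will be establishing that $G$ genuinely decomposes into $n$-cliques, i.e.\ that the $n-1$ points unjoined to a fixed valency-$n$ point $A$ are pairwise unjoined. The degree data alone are consistent with other $(n-1)$-regular graphs, so this step must use the defining property that two existing lines meet in at most one point. Concretely I would study, for a line $m\not\ni A$, the quantity $|m\cap U_A|$ (where $U_A$ is the set of points unjoined to $A$), which equals the number of lines of the pencil through $A$ that are disjoint from $m$; the crux is to show this is at most $1$ for every $m$, so that no existing line meets $U_A$ twice and $U_A$ is forced to be an unjoined clique. Controlling this parallelism count \emph{globally}, exploiting the small total deficiency budget $2n$ — rather than locally, where the natural incidence identities turn out to be tautological — is the main obstacle, and is precisely where the hypothesis $e\le 2$ is essential.
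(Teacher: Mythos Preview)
The paper does not prove this theorem at all: it is quoted from Dow's 1983 paper \cite[Theorem~4.1]{dow1983improved} and used only as a black box in Section~\ref{section3}. There is therefore no proof here to compare your proposal against.

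On the proposal itself: your treatment of $e=1$ is complete and correct. For $e=2$, two remarks. First, the dichotomy into cases (a) and (b) is right, but not for the reason you give; the inequality you used for $\delta\le 2$ does not by itself force at most one point of deficiency $2$. The clean way is to note that a point $Q$ with $\delta(Q)=2$ has $2(n-1)$ neighbours in $G$, all of which are deficient, while the total number of deficient points other than $Q$ is $2n-a-1$ (where $a$ is the number of deficiency-$2$ points); hence $2(n-1)\le 2n-a-1$, i.e.\ $a\le 1$. Second, the step you flag as ``the hard part'' --- that $G$ really is a union of two $n$-cliques rather than some other $(n-1)$-regular graph --- is a genuine gap, and you are right that pure degree information cannot close it. Dow's original argument does use an incidence/parallelism count of the type you sketch; reconstructing it would require making your last paragraph precise, in particular controlling for each existing line $m$ the number of pencil-lines through a fixed valency-$n$ point $A$ that miss $m$.
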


A $3$-$(n^d+1, n+1, 1)$-design is an \tit{inversive space}, and in the case that $d=2$ an \tit{inversive plane}. If $\mc{I}$ is a partial $3$-$(n^d+1,n+1,1)$-design, then we can consider the {\em derived} structure at a point $P$, denoted by $\mc{I}_P$: the points of $\mc{I}_P$ are the points of $\mc{I}$, different from $P$, and lines of $\mc{I}_P$ are the circles of $\mc{I}$ containing $P$ with the point $P$ is removed. It is easy to see that $\mc{I}_P$ is a partial $2$-$(n^d,n,1)$- design. 


In Section \ref{section3} we study the completion problem for inversive spaces by considering when the derived structures can be completed to full designs. 
Our most general completion result is given in Theorem \ref{inversive_completion}; this leads to our second main result (Corollary \ref{root_n_completion_of_inversive_plane}) which gives a completion result for inversive planes.


\begin{cor}\label{result_root_n_completion_of_inversive_plane}
    Let $\mc{I}$ be a partial inversive plane, i.e. a partial $3$-$(n^2+1,n+1,1)$-design. Suppose that for every point $P$, at least one of the following holds:
    
    \begin{enumerate}[label=(\roman*)]
        \item there are at least $n^2+n-\sqrt{n}$ circles through $P$ and parallelism is an equivalence relation in $\mc{I}_P$;
        \item there is at most one point in $\mc{I}_P$ of valency less than $n$ and parallelism is an equivalence relation in $\mc{I}_P$;
        \item $\mc{I}_P$ satisfies the requirements of Theorem \ref{DOW_pap_root_n_no_parallel} or Theorem \ref{DOW_pap_with_nothing_2_missing}.
    \end{enumerate}
    Then $\mc{I}$ can be completed to an inversive plane, i.e. a $3$-$(n^2+1,n+1,1)$-design.
\end{cor}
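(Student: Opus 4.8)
The plan is to reduce the completion of $\mc{I}$ to the completion of its derived structures and then to glue those completions into a single consistent family of circles. For a point $P$, recall that $\mc{I}_P$ is a partial affine plane of order $n$ and that the circles of $\mc{I}$ through $P$ are, after deleting $P$, exactly the lines of $\mc{I}_P$; so completing $\mc{I}$ to an inversive plane amounts to choosing, for every $P$, an affine completion $\overline{\mc{I}_P}$ of $\mc{I}_P$ in such a way that each new line, when $P$ is re-adjoined, agrees with the corresponding new line at the other base points as an $(n+1)$-point set. The argument thus splits into a local step (each $\mc{I}_P$ is completable) and a global step (the completions are mutually consistent).

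For the local step I would handle the three hypotheses separately. Under (i), the number of lines of $\mc{I}_P$ equals the number of circles through $P$, hence is at least $n^2+n-\sqrt{n}$; since $g(n)\le g_1(n)=n^2-1$ we have $n^2+n-\sqrt{n}>\mrm{max}(g(n)-1,\,n^2-\sqrt{n})$, so the Main Theorem (whose hypothesis that parallelism is an equivalence relation is exactly part of (i)) yields an affine completion of $\mc{I}_P$. Under (ii), parallelism is again an equivalence relation and at most one point has valency less than $n$, so $\mc{I}_P$ is completable by the completion lemma of Section \ref{section2} for partial affine planes with at most one point of deficient valency (Lemma \ref{no_pts_lss_than_n_lines}). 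Under (iii), $\mc{I}_P$ satisfies the hypotheses of Theorem \ref{DOW_pap_root_n_no_parallel} or Theorem \ref{DOW_pap_with_nothing_2_missing} and is completable directly. In every case $\mc{I}_P$ extends to an affine plane $\overline{\mc{I}_P}$.

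The heart of the proof is the global step, supplied by the general result Theorem \ref{inversive_completion}. Writing each new line $\ell$ of $\overline{\mc{I}_P}$ as a candidate circle $\hat\ell=\{P\}\cup\ell$, I would take as the circles of the completed structure the original circles together with all candidate circles, and verify that the result is a $3$-$(n^2+1,n+1,1)$-design; since all blocks already have $n+1$ points, it suffices to check that every triple lies on exactly one circle. Coverage is immediate: an uncovered triple $\{A,B,C\}$ becomes covered once $\mc{I}_A$ is completed, because $B$ and $C$ are then joined in $\overline{\mc{I}_A}$. The delicate point is consistency, namely that the candidate circle obtained for $\{A,B,C\}$ from base point $A$ coincides, as a set of $n+1$ points, with the ones obtained from $B$ and from $C$, and that no triple is covered twice.

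I expect this consistency to be the main obstacle, and it is exactly what Theorem \ref{inversive_completion} must establish. The route I would follow is to show that each of the dense completions $\overline{\mc{I}_P}$ is \emph{unique}, so that its new lines are canonically determined by $\mc{I}_P$ alone, and then to argue that the resulting new circle through an unjoined triple does not depend on which of its points is taken as base. Concretely, in the very dense regime forced by (i)--(iii) an unjoined pair of $\mc{I}_P$ lies in a deficient parallel class whose missing lines are forced by the uncovered points of that class; tracking how these forced lines at different base points must share their remaining $n-1$ points shows that the candidate circles seen from $A$, $B$ and $C$ are the same $(n+1)$-set, and that this set is used only once. With consistency and the triple count in hand, the family of circles forms a $3$-$(n^2+1,n+1,1)$-design containing $\mc{I}$, so $\mc{I}$ is completed to an inversive plane.
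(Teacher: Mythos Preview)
Your overall architecture matches the paper's: complete each derived structure, then invoke Theorem \ref{inversive_completion} to glue. The local step is fine (your use of Lemma \ref{no_pts_lss_than_n_lines} for case (ii) is a perfectly good alternative to the paper's route via Theorem \ref{pap_n_minus_root_n}).

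The gap is in the global step. You cite Theorem \ref{inversive_completion} as supplying consistency, but that theorem has specific hypotheses beyond mere completability of each $\mc{I}_P$: it requires that in every $\mc{I}'_P$ either (i) each added line has at least $\sqrt{n}$ simple points and no point lies on more than $\sqrt{n}$ added lines, or (ii) at most one point lies on more than one added line. You never verify these conditions. The paper's proof does exactly this verification: under hypothesis (i) of the corollary there are at most $\sqrt{n}$ added lines in $\mc{I}'_P$, so at most $\sqrt{n}$ pass through any point, and any added line meets the remaining $\le\sqrt{n}-1$ added lines in at most $\sqrt{n}-1$ points, leaving at least $n-\sqrt{n}+1\ge\sqrt{n}$ simple points --- giving condition (i) of Theorem \ref{inversive_completion}. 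Under hypotheses (ii) and (iii), at most one point has valency below $n$ (for (iii) this uses that the $e$ missing lines are forced through a single point, or that $e\le 2$), so at most one point carries more than one added line --- giving condition (ii) of Theorem \ref{inversive_completion}.

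Instead of checking these hypotheses, you sketch an independent consistency argument (``tracking how these forced lines at different base points must share their remaining $n-1$ points''). That sketch is too vague to be a proof: the actual mechanism, encapsulated in Lemma \ref{simple_point_gives_line}, hinges on the presence of \emph{simple} points on each added line, which is precisely what conditions (i)/(ii) of Theorem \ref{inversive_completion} guarantee. Without that count of simple points you have no leverage to show that the candidate circle at base $A$ coincides with the one at base $B$. So either verify the hypotheses of Theorem \ref{inversive_completion} explicitly, as above, or supply a genuine replacement for Lemma \ref{simple_point_gives_line}.
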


\section{A Completion Result for Affine Spaces}\label{section2}

In this section we first give a general result for when a partial $2$-$(n^d,n,1)$-design can be completed to a full design. We then prove our Main Theorem, improving on Theorem \ref{dowaffine}, yielding a new bound for when a partial affine plane can be completed to an affine plane.

\subsection{A Result for \texorpdfstring{$2$-$(n^d,n,1)$-designs}{2-designs}}

\begin{lemma}\label{valn_acts_as_line}
    Let $\mc{I}$ be a partial $2$-$(n^d,n,1)$-design, $n,d\geq 2$, such that given a line $\ell$ and point $P\notin \ell$, there are at most $\frac{n^d-1}{n-1}-n$ lines through $P$ that do not meet $\ell$. Let $Q$ be a point of $\mc{I}$. If $val(Q)=\frac{n^d-1}{n-1}-1$, then any two points $A,B$ that are unjoined to $Q$ are unjoined to each other.
\end{lemma}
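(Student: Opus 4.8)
The plan is to argue by contradiction, combining the hypothesis on lines avoiding a fixed line with a double count of the lines through $Q$. Throughout, write $r := \frac{n^d-1}{n-1}$ for the valency of a point in a full $2$-$(n^d,n,1)$-design, so that the hypothesis reads ``through any point not on a line $\ell$, at most $r-n$ lines avoid $\ell$'', and the assumption on $Q$ reads $val(Q)=r-1$.

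Suppose, for contradiction, that $A$ and $B$ are both unjoined to $Q$ but joined to each other, and let $m$ be a line through $A$ and $B$. First I would observe that $Q\notin m$: otherwise $m$ would join $Q$ to $A$, contradicting that $A$ is unjoined to $Q$. Hence the hypothesis applies to the pair $(Q,m)$, giving that at most $r-n$ of the lines through $Q$ are disjoint from $m$.

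Next I would bound from above the number of lines through $Q$ that meet $m$. Since $\mc{I}$ is a partial design, two distinct lines through $Q$ can only share the point $Q\notin m$, so distinct lines through $Q$ meet $m$ in distinct points; thus the number of such lines is at most the number of points of $m$ joined to $Q$. The line $m$ has $n$ points, two of which are $A$ and $B$, and both are unjoined to $Q$ by assumption, so at most $n-2$ points of $m$ are joined to $Q$. Therefore at most $n-2$ lines through $Q$ meet $m$. Combining this with $val(Q)=r-1$, the number of lines through $Q$ disjoint from $m$ is at least $(r-1)-(n-2)=r-n+1$, contradicting the bound $r-n$ from the hypothesis. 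This contradiction forces $A$ and $B$ to be unjoined, proving the claim.

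The computation is short, and the only genuinely delicate point — the step I would treat most carefully — is the upper bound $n-2$ on the lines through $Q$ meeting $m$. It is precisely the fact that \emph{both} endpoints $A,B\in m$ are unjoined to $Q$ that removes two potential intersection points and pushes the count one unit past what the hypothesis permits; using only that one endpoint is unjoined would give $n-1$ and just fail to reach a contradiction, so the use of the full hypothesis on $A$ \emph{and} $B$ is essential.
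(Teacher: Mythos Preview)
Your proof is correct and follows essentially the same argument as the paper: both pivot on the fact that at most $r-n$ lines through $Q$ miss the line joining $A$ and $B$, so the remaining $\ge n-1$ lines through $Q$ must hit $n-1$ distinct points of that line, which is impossible since both $A$ and $B$ are unjoined to $Q$. The paper counts from the ``lines meeting $m$'' side and derives that $B$ must then be joined to $Q$, while you count from the ``lines avoiding $m$'' side and derive that $\mathrm{val}(Q)\le r-2$; these are the same contradiction.
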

\begin{proof}
    Suppose that $A$ and $B$ are joined by some line $\ell$, then $Q\notin \ell$. Our assumption shows that there are at least $n-1$ lines through $Q$ that meet $\ell$, and hence, all $n-1$ points on $\ell$, different from $A$ (including $B$), are joined to $Q$, a contradiction. 
\end{proof}

\begin{lemma}\label{no_pts_lss_than_n_lines}
    Let $\mc{I} = (\mc{S},\mc{A})$ be a partial $2$-$(n^d,n,1)$-design, $n,d\geq 2$, such that given a line $\ell$ and point $P\notin \ell$, there are at most $\frac{n^d-1}{n-1}-n$ lines through $P$ that do not meet $\ell$. If at most one point, say $Q$, has valency less than $\frac{n^d-1}{n-1}-1$, then $\mc{I}$ can be completed to a full $2$-$(n^d,n,1)$-design.
\end{lemma}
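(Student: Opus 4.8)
The plan is to reconstruct the missing lines directly from the pattern of unjoined points, with Lemma \ref{valn_acts_as_line} as the workhorse. Write $r := \tfrac{n^d-1}{n-1}$ for the valency of every point in a full $2$-$(n^d,n,1)$-design, and for a point $P$ let $U(P)$ denote the set of points unjoined to $P$. The basic counting facts I would record first are: a point of valency $r$ is joined to all $n^d-1$ other points (so $|U(P)|=0$), while a point of valency $r-1$ has exactly $|U(P)|=n-1$ unjoined points, and more generally a point of valency $v$ has $|U(P)|=(r-v)(n-1)$. Two elementary observations follow. First, if $\mrm{val}(P)=r-1$ then Lemma \ref{valn_acts_as_line} says the $n-1$ points of $U(P)$ are pairwise unjoined, so $L_P:=\{P\}\cup U(P)$ is a set of $n$ mutually unjoined points — a candidate line. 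Second, every point of $U(Q)$ (for the exceptional point $Q$) must have valency exactly $r-1$: it is unjoined to at least one point, so its valency is $<r$, and being different from $Q$ it has valency $\geq r-1$ by hypothesis.

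Next I would prove that the candidate lines fit together consistently. The key claim is that if two candidate lines $L_P$ and $L_{P'}$ share a point $B$ of valency $r-1$, then $L_P=L_{P'}=L_B$. Indeed, whether or not $B$ equals $P$, the point $B$ is unjoined to the other $n-1$ points of $L_P$, so $L_P\setminus\{B\}\subseteq U(B)$; since both sets have size $n-1$ they coincide, giving $L_B=\{B\}\cup U(B)=L_P$, and likewise $L_B=L_{P'}$. Because every point of a candidate line is unjoined to the other $n-1$ points of that line, every point lying on a candidate line has valency at most $r-1$; hence the only point that can possibly lie on two \emph{distinct} candidate lines is the exceptional point $Q$. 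Consequently any two distinct candidate lines meet in at most one point (namely $Q$, if at all), and those candidate lines not passing through $Q$ are pairwise disjoint.

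I would then assemble the completion. If $Q$ exists, say with valency $v_Q<r-1$, then by the second observation each of the $(r-v_Q)(n-1)$ points of $U(Q)$ lies in a candidate line, and since such a point is unjoined to $Q$ we have $Q\in L_A$ for each $A\in U(Q)$; by the consistency claim these sort into exactly $r-v_Q$ distinct lines through $Q$ whose union is $\{Q\}\cup U(Q)$. Every remaining point of valency $r-1$ (those joined to $Q$, if $Q$ exists) lies on a unique candidate line missing $Q$, and these are pairwise disjoint. Adding all distinct candidate lines as new lines yields a valid partial design — within each new line the pairs were previously unjoined, and distinct new lines meet in at most one point — in which every point now has valency $r$. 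Therefore every pair of points is joined by a unique line, so the result is a full $2$-$(n^d,n,1)$-design. (When no point has valency below $r-1$ the argument degenerates to the disjoint case and the completion is immediate.)

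The main obstacle is the exceptional point $Q$: when $\mrm{val}(Q)<r-1$ its unjoined neighbourhood $U(Q)$ is not a single candidate line, so one must show it decomposes cleanly into $r-v_Q$ complete missing lines. This is exactly what the combination of the second observation (every neighbour of $Q$ has valency $r-1$, hence carries a well-defined candidate line through $Q$) and the consistency claim (those lines agree off $Q$ and partition $U(Q)$) delivers — and it is precisely here that Lemma \ref{valn_acts_as_line} does the essential work.
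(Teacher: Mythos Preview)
Your proof is correct and follows essentially the same approach as the paper: define the candidate lines $L_P=\{P\}\cup U(P)$ for points of valency $r-1$, use Lemma~\ref{valn_acts_as_line} to see their points are pairwise unjoined, and show that two distinct candidates can meet only in the exceptional point $Q$. The paper's version is more compressed---it does not separately analyse the decomposition of $U(Q)$ into $r-v_Q$ lines, but simply observes that every unjoined pair lies in some $L_P$ and that distinct $L_P$'s share at most the point $Q$, which suffices---whereas you spell out the structure around $Q$ explicitly; the underlying argument is the same.
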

\begin{proof}
    Let $\mc{A'}$ be the collection of sets $\{P\}\cup\{T:T\text{ unjoined to }P\}$ for all points $P$ of valency $\frac{n^d-1}{n-1}-1$. Note that each of these sets consist of $n$ points. Then by Lemma \ref{valn_acts_as_line}, any pair of points contained in any element of $\mc{A'}$ are unjoined. For any pair of unjoined points at least one of the points has valency $\frac{n^d-1}{n-1}-1$, and so the pair lies in an element of $\mc{A'}$.
    
    Let $\mc{C}$ and $\mc{D}$ be two elements of $\mc{A'}$, and suppose that the point $R$ is an element of both. Then $R$ is unjoined to all elements of $\mc{C}$ and $\mc{D}$. Then either $\mc{C}=\mc{D}$ or $val(R)<\frac{n^d-1}{n-1}-1$. So since there is at most one point of valency less than $\frac{n^d-1}{n-1}-1$, any two distinct elements of $\mc{A'}$ share at most one point, namely $Q$.
    
    We conclude that two points are either joined in $\mc{A}$, in which case they lie on exactly one element of $\mc{A}$ and no elements of $\mc{A'}$, or unjoined in $\mc{A}$, in which case they lie on exactly one element of $\mc{A'}$. Therefore $(\mc{S}, \mc{A}\cup\mc{A'})$ is a $2$-$(n^d,n,1)$-design.
\end{proof}

\subsection{A Completion Result for Affine Planes}

We first state a simple result relating certain partial affine planes to partial projective planes as done in \cite{dow1986completion}.

\begin{lemma}\label{affine_to_proj_completion} Let $g_i(n)$, $i=1,\ldots,4$, be as in Theorem \ref{DOW bound} and let $\min(g_1,g_2,g_3,g_4)=g$.
    Let $(\mc{S},\mc{A})$ be a PAP of order $n\geq 2$, with $b$ lines, in which parallelism is an equivalence relation and there are at most $n+1$ parallel classes. If $b>g(n)-1$, then $(\mc{S},\mc{A})$ can be completed to an affine plane. 
\end{lemma}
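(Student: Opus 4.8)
The plan is to reduce to the projective case covered by Theorem \ref{DOW bound} via the classical device of adjoining a line at infinity. Let $\mc{C}_1, \ldots, \mc{C}_k$ be the parallel classes of $(\mc{S}, \mc{A})$; since parallelism is an equivalence relation with at most $n+1$ classes, $k \leq n+1$. First I would introduce $n+1$ new points $P_1, \ldots, P_{n+1}$ (points at infinity) and associate $P_i$ with the class $\mc{C}_i$ for $1 \leq i \leq k$, the remaining points $P_{k+1}, \ldots, P_{n+1}$ (if $k < n+1$) serving only to pad out the construction. I then extend each line $L \in \mc{A}$ to $\bar{L} := L \cup \{P_i\}$, where $\mc{C}_i$ is the parallel class containing $L$, and adjoin the single new line $\ell_\infty := \{P_1, \ldots, P_{n+1}\}$. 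With $\mc{S}' := \mc{S} \cup \{P_1, \ldots, P_{n+1}\}$ and $\mc{A}' := \{\bar{L} : L \in \mc{A}\} \cup \{\ell_\infty\}$, the structure $(\mc{S}', \mc{A}')$ has $n^2 + (n+1) = n^2+n+1$ points, and every one of its lines has exactly $n+1$ points.

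The key step is to check that $(\mc{S}', \mc{A}')$ is a partial projective plane of order $n$ with no disjoint lines, which is precisely the input required by Theorem \ref{DOW bound}. Take two lines $\bar{L}, \bar{M}$ coming from $L, M \in \mc{A}$. If $L$ and $M$ lie in the same parallel class $\mc{C}_i$, then they are parallel, hence disjoint in $\mc{S}$, so $\bar{L} \cap \bar{M} = \{P_i\}$. If they lie in different classes then---and this is where the equivalence-relation hypothesis is essential---they are not parallel and so meet in exactly one point of $\mc{S}$, while being assigned distinct points at infinity; thus $\bar{L} \cap \bar{M}$ is again a single point. Lastly $\ell_\infty$ meets each $\bar{L}$ in exactly its point at infinity. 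Hence any two distinct lines of $\mc{A}'$ share exactly one point, so $(\mc{S}', \mc{A}')$ is a partial projective plane with no disjoint lines. I expect this verification, and in particular the observation that the equivalence-relation hypothesis forces every pair of disjoint affine lines to receive a common point at infinity, to be the main (if routine) point of the argument; the padding points cause no trouble, as they lie only on $\ell_\infty$.

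To conclude, I may assume $(\mc{S}, \mc{A})$ is not already an affine plane, so $b < n^2+n$ and $(\mc{S}', \mc{A}')$ has $b+1 < n^2+n+1$ lines. From $b > g(n)-1$ it follows that $b+1 > g(n)$, so Theorem \ref{DOW bound} embeds $(\mc{S}', \mc{A}')$ in a projective plane $(\mc{S}', \mc{A}'')$ of order $n$, with $\mc{A}' \subseteq \mc{A}''$. Since $\ell_\infty \in \mc{A}''$, I would delete $\ell_\infty$ together with its $n+1$ points from this projective plane; the result is an affine plane of order $n$ on the point set $\mc{S}$. Each $\bar{L}$ meets $\ell_\infty$ only in its point at infinity, so removing that point returns the original line $L$; therefore every line of $\mc{A}$ reappears as a line of the affine plane, which is thus the desired completion of $(\mc{S}, \mc{A})$.
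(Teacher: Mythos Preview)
Your proof is correct and follows essentially the same approach as the paper: adjoin $n+1$ points at infinity and a line at infinity, verify the resulting structure is a partial projective plane with no disjoint lines and $b+1>g(n)$ lines, apply Theorem~\ref{DOW bound}, and then remove the line at infinity to recover the completed affine plane. Your write-up is more explicit in the verification that distinct extended lines meet in exactly one point, but the argument is the same.
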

\begin{proof}
    Construct a new incidence structure from $(\mc{S},\mc{A})$ by adding $n+1$ points to the point set and a line containing these $n+1$ new points to the line set. Let the number of parallel classes be $k\leq n+1$. Pick a one-to-one correspondence between $k$ of the $n+1$ new points and the $k$ parallel classes of $(\mc{S},\mc{A})$, and add the corresponding point to each line of that parallel class. Then this structure is a partial projective plane of order $n$ with $b+1$ lines, no two of which are disjoint, which can be completed to a projective plane by Theorem \ref{DOW bound}. Removing the $n+1$ added points and the added line then gives the desired affine plane.
\end{proof}

\begin{lemma}\label{n+1_pt_gives_n+1_parallel_classes}
    Let $(\mc{S},\mc{A})$ be a PAP of order $n\geq 2$, in which parallelism is an equivalence relation. If there is a point $P$ of valency $n+1$, then $(\mc{S},\mc{A})$ has exactly $n+1$ parallel classes.
\end{lemma}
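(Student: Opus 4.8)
The plan is to use the $n+1$ lines through $P$ as canonical representatives of parallel classes and to show that every line of $\mc{A}$ is parallel to exactly one of them. Since any two lines through $P$ already meet in the point $P$, no two of them can be parallel, so they lie in $n+1$ pairwise distinct parallel classes. This immediately yields \emph{at least} $n+1$ parallel classes, and the whole task reduces to showing that there are no others, i.e.\ that no line falls outside the classes determined by the lines through $P$.

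The key preliminary observation I would make is a covering argument. The $n+1$ lines through $P$ pairwise meet only in $P$ (two distinct lines meet in at most one point), so together they contain $1 + (n+1)(n-1) = n^2$ points. As $\mc{S}$ has exactly $n^2$ points, these lines cover all of $\mc{S}$; equivalently, every point $X\neq P$ is joined to $P$ by a unique line.

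Next I would take an arbitrary line $m$ and argue that it is parallel to exactly one line through $P$. If $P\in m$ this is clear, so suppose $P\notin m$. Each of the $n$ points of $m$ lies on a unique line through $P$, and these $n$ lines are pairwise distinct: if two points of $m$ lay on the same line through $P$, that line would share two points with $m$ and hence equal $m$, contradicting $P\notin m$. Thus $m$ meets exactly $n$ of the $n+1$ lines through $P$ and is disjoint from precisely one of them, say $\ell$; hence $m$ is parallel to $\ell$. Transitivity of parallelism then forbids $m$ from being parallel to a second line through $P$, since distinct lines through $P$ are never parallel to one another. Consequently every line of $\mc{A}$ belongs to the parallel class of exactly one line through $P$, and there are exactly $n+1$ parallel classes.

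The only step requiring any thought is the covering argument: recognising that valency $n+1$ forces the lines through $P$ to fill all $n^2$ points is precisely what makes the pigeonhole count $n < n+1$ bite in the final step. Once that is in place, everything reduces to the observation that a line cannot carry $n+1$ distinct intersection points on $n$ points, so I do not expect a genuine obstacle here.
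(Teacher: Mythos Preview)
Your argument is correct and follows essentially the same approach as the paper: the covering argument that the $n+1$ lines through $P$ partition $\mc{S}$, followed by the count showing any other line meets exactly $n$ of them and is therefore parallel to the remaining one. Your write-up is simply more detailed (making explicit why the $n$ lines through $P$ meeting $m$ are distinct, and invoking transitivity for uniqueness), but the underlying proof is the same.
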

\begin{proof}
    The $n+1$ lines through $P$ cover $\mc{S}$. Let $\ell$ be a line not through $P$. Then $\ell$ meets $n$ of the lines through $P$ in a single point, and so is parallel to one of the lines through $P$. Therefore every line belongs to the parallel class of one of the lines through $P$, so there are $n+1$ parallel classes.
\end{proof}

\begin{lemma}\label{oneparallel}
    Let $(\mc{S},\mc{A})$ be a PAP. Then parallelism is an equivalence relation on $\mc{A}$ if and only if given a line $\ell$ and a point $P\notin \ell$, there is at most one line through $P$ parallel to $\ell$.
\end{lemma}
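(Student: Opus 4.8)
The plan is to first note that parallelism is automatically reflexive, since every line is parallel to itself by definition, and automatically symmetric, since the disjointness relation $\ell\cap m=\emptyset$ is symmetric. Hence the only substantive content in the phrase ``equivalence relation'' is transitivity, and both directions of the stated equivalence amount to connecting transitivity with the uniqueness condition on parallels through a point.

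For the forward implication, I would assume parallelism is an equivalence relation and argue by contradiction: suppose some point $P\notin\ell$ lies on two distinct lines $m_1,m_2$ with $m_1\parallel\ell$ and $m_2\parallel\ell$. By symmetry $\ell\parallel m_2$, and then transitivity yields $m_1\parallel m_2$. But $m_1$ and $m_2$ share the point $P$, so they are neither equal (by assumption) nor disjoint, contradicting $m_1\parallel m_2$. Thus at most one line through $P$ is parallel to $\ell$.

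For the converse, I would assume the uniqueness condition and verify transitivity directly. Given $\ell\parallel m$ and $m\parallel k$, the conclusion $\ell\parallel k$ is immediate whenever two of the three lines coincide, so I would reduce to the case that $\ell,m,k$ are pairwise distinct; then each parallelism is genuine disjointness. Assuming for contradiction that $\ell\cap k\neq\emptyset$, the PAP axiom that two lines meet in at most one point lets me write $\ell\cap k=\{P\}$. Since $\ell\cap m=\emptyset$ and $P\in\ell$, we have $P\notin m$, and now $\ell$ and $k$ are two distinct lines through $P$ both parallel to $m$ (using symmetry to turn $m\parallel k$ into $k\parallel m$), contradicting uniqueness. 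Hence $\ell\cap k=\emptyset$ and $\ell\parallel k$.

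The only place demanding care is the bookkeeping forced by the definition of parallel including the coincidence case $\ell=m$: one must dispose of the degenerate subcases where some of the lines are equal before replacing ``parallel'' by ``disjoint,'' and then invoke the at-most-one-point axiom to locate the intersection point $P$. I expect no genuine obstacle beyond this; the argument is elementary.
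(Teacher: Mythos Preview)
Your proposal is correct and follows essentially the same argument as the paper's proof: both directions exploit that reflexivity and symmetry are automatic, reduce the forward implication to observing that two parallels through $P$ would be forced parallel to each other by transitivity, and handle the converse by supposing $\ell_1\cap\ell_3=\{Q\}$ and noting this yields two distinct parallels to $\ell_2$ through $Q\notin\ell_2$. Your additional remarks on disposing of the degenerate coincidence cases are a minor elaboration but do not change the approach.
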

\begin{proof}
    Suppose that parallelism is an equivalence relation on $\mc{A}$ and that there are two lines $m_1,m_2$ through $P$ parallel to $\ell$. Then since $m_1$ is parallel to $\ell$, $\ell$ is parallel to $m_2$ and parallelism is an equivalence relation on $\mc{A}$, $m_1$ is parallel to $m_2$. Since $m_1$ and $m_2$ share a point this means they must be equal.

    Conversely, suppose that given a line $\ell$ and point $P\notin \ell$ there is at most one line through $P$ parallel to $\ell$. Parallelism is by definition symmetric and reflexive. Suppose that $\ell_1$ is parallel to $\ell_2$ and $\ell_2$ is parallel to $\ell_3$, and that these three lines are distinct. Then if $\ell_1$ and $\ell_3$ are not parallel,  $\ell_1 \cap \ell_3 = \{Q\}$ and $Q\notin \ell_2$, a contradiction as there cannot be two lines $\ell_1\neq \ell_3$ through $Q$ parallel to $\ell_2$. 
\end{proof}

We first improve the bound $b>n^2$ on the number of lines of Theorem \ref{dowaffine} to $b\geq n^2$.

\begin{theorem}\label{n2-1_completable}
    Let $(\mc{S},\mc{A})$ be a PAP of order $n$ with $b$ lines. If $b\geq n^2$ and parallelism is an equivalence relation on $\mc{A}$, then $(\mc{S},\mc{A})$ can be completed to an affine plane.
\end{theorem}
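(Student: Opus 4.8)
The plan is to reduce everything to the two completion lemmas already established, namely Lemma \ref{no_pts_lss_than_n_lines} (applied with $d=2$) and Lemma \ref{affine_to_proj_completion}. The single numerical fact driving the argument is the incidence count $\sum_{P\in\mc{S}} val(P)=bn$, obtained by counting incident point–line pairs in two ways ($b$ lines, each with $n$ points). Since $val(P)\le n+1$ for every point, I would first extract a dichotomy from the hypothesis $b\ge n^2$: if \emph{no} point had valency $n+1$, then $bn=\sum_P val(P)\le n\cdot n^2=n^3$, forcing $b\le n^2$ and hence, with $b\ge n^2$, equality $b=n^2$ together with $val(P)=n$ for \emph{every} point $P$. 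Thus either (A) some point has valency $n+1$, or (B) $b=n^2$ and every point has valency exactly $n$.

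In Case (A), Lemma \ref{n+1_pt_gives_n+1_parallel_classes} immediately gives that $(\mc{S},\mc{A})$ has exactly $n+1$ parallel classes. Since $g(n)\le g_1(n)=n^2-1$, we have $g(n)-1\le n^2-2<n^2\le b$, so the hypotheses of Lemma \ref{affine_to_proj_completion} (parallelism an equivalence relation, at most $n+1$ parallel classes, $b>g(n)-1$) are met, and it completes $(\mc{S},\mc{A})$ to an affine plane. In Case (B), I would invoke Lemma \ref{no_pts_lss_than_n_lines} with $d=2$. Here the relevant constant is $\frac{n^2-1}{n-1}-n=1$, so the lemma's standing hypothesis is exactly that through any external point there is at most one parallel line; this holds by Lemma \ref{oneparallel}, since parallelism is an equivalence relation. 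Moreover $\frac{n^2-1}{n-1}-1=n$, and because every point has valency exactly $n$ there are zero points of smaller valency, so the lemma's exceptional point simply does not arise. Hence Lemma \ref{no_pts_lss_than_n_lines} completes $(\mc{S},\mc{A})$ to a $2$-$(n^2,n,1)$-design, i.e.\ an affine plane of order $n$.

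The only content genuinely beyond Dow's Theorem \ref{dowaffine} is the boundary case $b=n^2$, which is precisely Case (B); the key observation is that at $b=n^2$ the absence of a valency-$(n+1)$ point forces \emph{all} valencies to equal $n$, placing us exactly in the regime covered by the design-completion Lemma \ref{no_pts_lss_than_n_lines}. I do not anticipate a serious obstacle here. The work is entirely bookkeeping: matching the arithmetic constants $\frac{n^2-1}{n-1}-n=1$ and $\frac{n^2-1}{n-1}-1=n$ to the hypotheses of the earlier lemmas, and verifying the harmless inequality $b>g(n)-1$ needed to apply Lemma \ref{affine_to_proj_completion} in Case (A). If one prefers, Case (A) with $b>n^2$ could instead be deferred directly to Theorem \ref{dowaffine}, leaving only the self-contained boundary analysis of Case (B); but routing both cases through the four lemmas above yields a uniform proof of the sharper statement $b\ge n^2$.
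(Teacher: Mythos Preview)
Your proposal is correct and follows essentially the same approach as the paper: the same dichotomy (a point of valency $n+1$ versus $b=n^2$ with every valency equal to $n$), handled via the same lemmas (Lemmas \ref{n+1_pt_gives_n+1_parallel_classes} and \ref{affine_to_proj_completion} in the first case, Lemmas \ref{oneparallel} and \ref{no_pts_lss_than_n_lines} in the second). Your write-up is slightly more explicit in verifying $b>g(n)-1$ and in matching the constants $\frac{n^2-1}{n-1}-n=1$ and $\frac{n^2-1}{n-1}-1=n$, but no new idea is involved.
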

\begin{proof}
    If $b>n^2$, then the average valency of a point is $\frac{b}{n}>n$, so there must be a point of valency $n+1$. Therefore if $b>n^2$ or a point of valency $n+1$ exists, then $(\mc{S},\mc{A})$ can be completed to an affine plane by Lemma \ref{n+1_pt_gives_n+1_parallel_classes} and Lemma \ref{affine_to_proj_completion}. 
    
    We now consider the remaining case: $b=n^2$ and there is no point of valency $n+1$. Then the average valency of a point is exactly $n$, hence every point has valency $n$. By Lemma \ref{oneparallel}, we have that $(\mc{S},\mc{A})$ satisfies the assumptions of Lemma \ref{no_pts_lss_than_n_lines}, so can be completed to an affine plane.
\end{proof}

To further improve the bound 
 on the number of lines in Theorem \ref{dowaffine}, we will
show the existence of an (incomplete) parallel class and point of valency $n$ outside this parallel class. This will then imply that there are at most $n+1$ parallel classes from which the result follows. We will need the following Lemma.

\begin{lemma}\label{no_full_pt_properties}
    Let $(\mc{S},\mc{A})$ be a PAP of order $n$ with $b=n^2-a$ lines, where $1\leq a \leq n-1$. Suppose that no point has valency $n+1$. Then \begin{itemize}
    \item[(i)] every parallel class has at least $n-a$ lines, and \item[(ii)] there are at most $na$ points of valency less than $n$.
    \end{itemize}
\end{lemma}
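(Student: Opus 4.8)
The plan is to establish both parts by double counting valencies, treating (ii) as a single global count and (i) as a count localised to one parallel class; throughout I use the standing hypothesis that parallelism is an equivalence relation. For (ii), observe that since no point has valency $n+1$, every point has valency at most $n$, so the deficit $n-val(X)$ is nonnegative for each point $X$. Counting incidences gives $\sum_{X\in\mc{S}} val(X) = bn = n^3-an$, and hence the total deficit satisfies $\sum_{X\in\mc{S}}(n-val(X)) = n\cdot n^2-(n^3-an)=an$. Each point of valency less than $n$ contributes at least $1$ to this total, so there are at most $an=na$ such points, which is exactly (ii).

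For (i), fix a parallel class $\mc{P}$ and set $k:=|\mc{P}|\geq 1$; the goal is to show $k\geq n-a$. The key geometric input, which relies on parallelism being an equivalence relation, is that any line $\ell\notin\mc{P}$ is parallel to no line of $\mc{P}$ and therefore meets each of the $k$ lines of $\mc{P}$ in exactly one point. Fix $m\in\mc{P}$. Since the lines of $\mc{P}$ are pairwise disjoint, the only line of $\mc{P}$ through a point $X\in m$ is $m$ itself, so the lines through $X$ other than $m$ are precisely the lines outside $\mc{P}$ passing through $X$. Summing over the $n$ points of $m$, and using that each of the $b-k$ lines outside $\mc{P}$ meets $m$ in exactly one point, gives the identity $\sum_{X\in m}(val(X)-1)=b-k$, that is, $\sum_{X\in m} val(X)=n^2-a-k+n$.

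Summing this identity over all $m\in\mc{P}$, and noting that every point covered by $\mc{P}$ lies on exactly one line of $\mc{P}$, yields $\sum_{X\text{ covered}} val(X)=k(n^2-a-k+n)$ over the $kn$ covered points. Since every valency is at most $n$, the left-hand side is at most $kn\cdot n=kn^2$. Dividing the inequality $k(n^2-a-k+n)\leq kn^2$ by $k>0$ and simplifying gives $k\geq n-a$, as required. The hypothesis $1\leq a\leq n-1$ makes $0<n-a<n$, so the conclusion is nonvacuous and consistent with the elementary bound that a parallel class of pairwise disjoint lines contains at most $n$ lines.

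I expect the main obstacle to be the careful bookkeeping in the per-class flag count, rather than any algebra. Two points must be checked precisely: that no second line of $\mc{P}$ passes through a given point of $m$ (which is the disjointness of the class), and that every line outside $\mc{P}$ is counted exactly once in the sum over $X\in m$. The latter is exactly where the equivalence-relation hypothesis is essential, since it forces each such line to meet $m$ in a genuine point rather than being parallel to it; without this, a line outside $\mc{P}$ could fail to meet $m$ and the identity would collapse. Once these two facts are secured, both conclusions follow from the valency bound $val(X)\leq n$.
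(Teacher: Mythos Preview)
Your proof is correct and follows essentially the same approach as the paper: part (ii) is the same global incidence count, and part (i) is the same local count of lines meeting a fixed line $m$ of the class, just phrased as a direct inequality rather than the paper's pigeonhole-by-contradiction. Note that your sum over all $m\in\mc{P}$ is superfluous---the inequality $\sum_{X\in m} val(X)=n^2-a-k+n\leq n\cdot n$ for a single $m$ already gives $k\geq n-a$, which is exactly the paper's argument.
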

\begin{proof}
    \begin{itemize}
    \item[(i)]Suppose to the contrary that the parallel class of some line $\ell$ has size at most $n-a-1$. Then the lines not in this parallel class, of which there are at least $n^2-n+1$, all meet $\ell$, and by the pigeonhole principle some point of $\ell$ has valency $n+1$, a contradiction.

    \item[(ii)]There are $bn = n^3-an$ point-line pairs. Suppose that there are $x$ points of valency less than $n$. Then the number of point-line pairs is
    \begin{align*}n^3-an=\sum_{P\in\mc{S}}val(P) \leq x(n-1) + (n^2-x)n = -x+n^3,\end{align*}
    hence, $x\leq na$. 
    \end{itemize}
\end{proof}

\begin{theorem}\label{pap_n_minus_root_n} Let $g_i(n)$, $i=1,\ldots,4$, be as in Theorem \ref{DOW bound} and let $\min(g_1,g_2,g_3,g_4)=g$.
    Let $(\mc{S},\mc{A})$ be a PAP of order $n$ with $b$ lines. Suppose that parallelism is an equivalence relation on $\mc{A}$, and that $b=n^2-a$, with 
   $1\leq a < \mrm{{min}}(\sqrt{n}, n^2-g(n)+1)$. Then $(\mc{S},\mc{A})$ can be completed to an affine plane.
\end{theorem}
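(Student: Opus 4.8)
The plan is to reduce everything to a bound on the number of parallel classes and then invoke Lemma~\ref{affine_to_proj_completion}. Since $a < n^2 - g(n) + 1$ forces $b = n^2 - a > g(n) - 1$, and parallelism is assumed to be an equivalence relation, it suffices to prove that $(\mc S, \mc A)$ has at most $n+1$ parallel classes. If some point has valency $n+1$ this is immediate from Lemma~\ref{n+1_pt_gives_n+1_parallel_classes}, so I would assume from now on that no point has valency $n+1$; Lemma~\ref{no_full_pt_properties} then supplies the two quantitative facts I will lean on, namely that every parallel class has at least $n - a$ lines and that at most $na$ points have valency less than $n$.

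Next I would fix a point $P$ of valency $n$, which exists because at most $na < n^2$ points have smaller valency and none has larger. The $n$ lines through $P$ lie in $n$ pairwise distinct parallel classes $\mc C_1, \dots, \mc C_n$, and any parallel class meeting $P$ is one of these; moreover a parallel class with no line through $P$ cannot be complete, since a complete class consists of $n$ mutually disjoint lines covering all $n^2$ points and hence would pass through $P$. Thus the goal becomes: show there is \emph{at most one} parallel class none of whose lines passes through $P$. The central tool here is Lemma~\ref{valn_acts_as_line} with $d = 2$: its hypothesis is exactly that parallelism is an equivalence relation (by Lemma~\ref{oneparallel}), and since $\mrm{val}(P) = \frac{n^2-1}{n-1} - 1 = n$ it tells me that the $n-1$ points unjoined to $P$ are pairwise unjoined. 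Hence $L_P := \{P\} \cup \{T : T \text{ unjoined to } P\}$ is a set of $n$ pairwise unjoined points, so \emph{every} line of $\mc A$ meets $L_P$ in at most one point. A short argument (a line through an unjoined-to-$P$ point that is parallel to none of the $m_i$ would meet all of them and thus consist only of points joined to $P$) shows that every line through a point of $L_P$ lies in some $\mc C_i$; equivalently, the parallel classes avoiding $P$ are precisely the classes all of whose lines are disjoint from $L_P$, that is, the classes whose lines are transversals hitting each line through $P$.

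The heart of the proof, and the step I expect to be the main obstacle, is to show there is at most one such \emph{transversal class}. Two transversal classes $\mc D, \mc E$ each have at least $n - a$ lines (Lemma~\ref{no_full_pt_properties}), their lines pairwise meet (lines in different classes are never parallel, so meet in exactly one point), and on each line through $P$ they jointly omit at most $2(a-1)$ of the $n-1$ relevant points; so they cover a common set of size roughly $n(n - 2a)$, every point of which already lies in the three classes $\mc C_i, \mc D, \mc E$. The difficulty is that first- and second-moment counts alone keep the average valency pinned just below $n+1$ and do not by themselves produce a point of valency $n+1$; the argument must instead exploit the exact incidence structure (each pair of lines meeting in at most one point, together with the virtual line $L_P$) and use the hypothesis $a < \sqrt n$ in an essential way, showing that a second transversal class would over-saturate some point of $\mc S$ to valency $n+1$, contradicting the standing assumption. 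Granting this, there are at most $n$ classes through $P$ plus at most one transversal class, hence at most $n+1$ classes in total, and the theorem follows from Lemma~\ref{affine_to_proj_completion}. I would also record that the case in which at most one point has valency less than $n$ can be dispatched directly by Lemma~\ref{no_pts_lss_than_n_lines}, which suggests organising the remaining analysis around the (at most $na$) points of deficient valency.
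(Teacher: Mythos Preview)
Your overall architecture matches the paper's exactly: reduce to showing there are at most $n+1$ parallel classes, handle the valency-$(n+1)$ case via Lemma~\ref{n+1_pt_gives_n+1_parallel_classes}, then fix a point $P$ of valency $n$ and argue that at most one parallel class avoids $P$. The set $L_P$ you introduce is precisely the complement (together with $P$) of the paper's $\mc{M}_P$, so your ``transversal classes'' are exactly the paper's $\mc{L}_1,\mc{L}_2$.

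The gap is the step you yourself flag as the main obstacle, and your proposed attack on it is pointed in the wrong direction. You suggest that two transversal classes would force some point to valency $n+1$; the paper does \emph{not} produce such a point, and in fact no moment/valency argument is needed. The key observation you are missing is a two-line inclusion--exclusion count. If $\mc{L}_1,\mc{L}_2$ are two parallel classes with no line through $P$, of sizes $n-d_1$ and $n-d_2$, then (i) every line of $\mc{L}_1$ meets every line of $\mc{L}_2$ in exactly one point (they lie in different parallel classes, so are non-parallel), and (ii) lines within a class are pairwise disjoint. Hence the number of points covered by $\mc{L}_1\cup\mc{L}_2$ is
\[
(n-d_1)n+(n-d_2)n-(n-d_1)(n-d_2)=n^2-d_1d_2.
\]
But every such point lies on a line of $\mc{L}_i$, which (as you correctly argued) is a transversal to the $n$ lines through $P$ and hence lies entirely in $\mc{S}\setminus L_P$, a set of only $n^2-n$ points. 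Thus $n^2-d_1d_2\le n^2-n$, i.e.\ $d_1d_2\ge n$. Since Lemma~\ref{no_full_pt_properties}(i) gives $d_i\le a<\sqrt{n}$, this is a contradiction. That is the entire ``heart of the proof''; there is no delicate saturation argument, and the hypothesis $a<\sqrt{n}$ enters only through $d_1d_2<n$.
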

\begin{proof}
    Suppose first that there is a point of valency $n+1$. Then $(\mc{S},\mc{A})$ can be completed to an affine plane by Lemma \ref{n+1_pt_gives_n+1_parallel_classes} and Lemma \ref{affine_to_proj_completion}.

    We may now assume that there is no point of valency $n+1$, so Lemma \ref{no_full_pt_properties} shows that there is a point $P$ of valency $n$. Suppose to the contrary that there are at least $n+2$ parallel classes. Since $P$ has valency $n$, there are two parallel classes, say $\mc{L}_1$ and $\mc{L}_2$ not containing a line through $P$. Denote the set of points covered by the lines of $\mc{A}$ through $P$ by $\mc{M}_P$, then the points covered by the lines of $\mathcal{L}_i$, $i=1,2$ are contained in $\mc{M}_P$. Let $|\mc{L}_i|=n-d_i$, $i=1,2$. It follows from Lemma \ref{no_full_pt_properties} that $d_i\leq a < \sqrt{n}$, and hence $n^2-d_1d_2>n^2-n$. Since every line of $\mathcal{L}_1$ meets every line of $\mathcal{L}_2$, the number of points covered by the lines of $\mathcal{L}_1\cup \mathcal{L}_2$ equals \[(n-d_1)n+(n-d_2)n-(n-d_1)(n-d_2)=n^2-d_1d_2.\] Since these points are all contained in $\mc{M}_P$ and $|\mc{M}_P|=n(n-1),$ it follows that $n^2-d_1d_2\leq n^2-n$, a contradiction. Therefore, there are at most $n+1$ parallel classes, and so by Lemma \ref{affine_to_proj_completion}, $(\mc{S},\mc{A})$ can be completed to an affine plane.

    
    
    
\end{proof}

In the case that $n$ is a square, the following example shows that the above bound is tight.

\begin{example}\label{ex:Baer}
Let $(\mc{S}, \mc{A})$ be a projective plane of order $n$, where $n$ is a square, and consider the point set $\mathcal{B}$ of a  Baer subplane in $(\mc{S}, \mc{A})$ Let $\mc{L}$ be the set of lines of $\mc{A}$ that meet $\mc{B}$ in a single point. The incidence structure $\mathcal{I}=(\mc{S}\setminus\mc{B}, \mc{L})$ has $n^2-\sqrt{n}$ points and lines, all lines have $n$ points, and parallelism is an equivalence relation on $\mc{L}$ with $|\mc{B}| = n+\sqrt{n}+1$ parallel classes. Adjoining $\sqrt{n}$ empty points to $\mathcal{I}$, we obtain a partial affine plane where parallel is an equivalence relation that cannot be embedded in an affine plane (since it has more than $n+1$ parallel classes).
\end{example}

\begin{theorem}\label{pap_n_minus_root_2n} 
 Let $g_i(n)$, $i=1,\ldots,4$, be as in Theorem \ref{DOW bound} and let $\min(g_1,g_2,g_3,g_4)=g$.
    Suppose that $\mc{I} = (\mc{S},\mc{A})$ is a partial affine plane of order $n$ that cannot be extended to an affine plane and suppose that parallelism is an equivalence relation. Then $|\mc{A}| \leq \mrm{max}(g(n)-1, n^2-\sqrt{n})$. Furthermore if $|A| > \mrm{max}(g(n)-1, n^2-f)$, where $f$ is the positive real number satisfying $f(f+1)=2n$, then $\mc{I}$ has a point of valency $0$ and can be embedded in a projective plane of order $n$. Moreover, if $|\mc{A}|=n^2-f$, then either $\mc{I}$ has a point of valency $0$ and can be embedded in a projective plane of order $n$, or there exists an $n$-group divisible design of type $(n-f)^{n+f+2}$ with $n^2-f$ points and lines.
\end{theorem}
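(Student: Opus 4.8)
The plan is to build on the preceding theorems by reducing to the critical regime. By Theorem~\ref{pap_n_minus_root_n}, whenever $b=n^2-a$ with $1\leq a<\mrm{min}(\sqrt{n},\,n^2-g(n)+1)$ the structure completes, so a non-completable $\mc{I}$ must satisfy $b\leq\mrm{max}(g(n)-1,\,n^2-\sqrt{n})$, giving the first bound immediately. For the refined statement I would assume $b=n^2-a$ with $\sqrt{n}\leq a$ (the interesting range, since below $\sqrt{n}$ we complete) and, as in the proof of Theorem~\ref{pap_n_minus_root_n}, assume there is no point of valency $n+1$ (otherwise Lemma~\ref{n+1_pt_gives_n+1_parallel_classes} and Lemma~\ref{affine_to_proj_completion} finish the argument). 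Since $\mc{I}$ does not complete, Lemma~\ref{affine_to_proj_completion} forces at least $n+2$ parallel classes; picking a point $P$ of valency $n$ (which exists by Lemma~\ref{no_full_pt_properties}(ii) as long as $na<n^2$), there are two parallel classes $\mc{L}_1,\mc{L}_2$ of sizes $n-d_1,n-d_2$ avoiding $P$, whose union covers $n^2-d_1d_2$ points inside $\mc{M}_P$ of size $n(n-1)$, yielding $d_1d_2\geq n$. Combined with $d_i\leq a$ (Lemma~\ref{no_full_pt_properties}(i)), this gives $a^2\geq d_1d_2\geq n$, i.e.\ $a\geq\sqrt{n}$, recovering the threshold; so I would push the counting one step finer to extract the $f(f+1)=2n$ bound.

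The key refinement is to count more carefully how the points of $\mc{S}$ of low valency are distributed relative to $P$. A point of valency $0$ is one not covered by any line, and the Main Theorem predicts such a point appears precisely in the strict inequality $b>n^2-f$. The approach is to combine two estimates: the global bound of Lemma~\ref{no_full_pt_properties}(ii) that there are at most $na$ points of valency less than $n$, and a local count showing that if there are several deficient parallel classes avoiding $P$, the points they fail to cover must pile up. Specifically, if $\mc{L}_1,\ldots,\mc{L}_k$ are the parallel classes missing from $P$ with deficiencies $d_1,\ldots,d_k$ (so $k\geq 2$), then iterating the inclusion–exclusion covering count within $\mc{M}_P$ forces $\sum_{i<j}d_id_j$ to be large, and by convexity each $d_i$ cannot be too small; I would translate this into a lower bound on $a$ in terms of $n$ that sharpens $a\geq\sqrt{n}$ to roughly $a(a+1)\geq 2n$, i.e.\ $a\geq f$. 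The function $f(f+1)=2n$ arises naturally as the point where two small deficiencies $d_1=d_2$ with $d_1+d_2\leq a$ first satisfy the covering constraint $d_1d_2\geq n$ with the additional slack needed to also accommodate a third class or an uncovered point.

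For the extremal classification when $|\mc{A}|=n^2-f$, the plan is to show equality forces all the above inequalities to be tight simultaneously: every point has valency exactly $n$ except for a controlled set, the two (or more) deficient classes have equal deficiency $n-f$, and the uncovered points organise into the groups of a group divisible design. Concretely, when no valency-$0$ point exists, every point lies on a line, so the deficient points partition into the complements of the deficient parallel classes; tightness of the covering bound $n^2-d_1d_2=n(n-1)$ with $d_1=d_2=f$ (using $f^2\approx n$ from $f(f+1)=2n$, handled via the exact relation) forces the uncovered structure to be a resolvable configuration. I would then verify directly that taking the $n+f+2$ parallel classes as groups of size $n-f$ and the lines of $\mc{A}$ as blocks yields an $n$-group divisible design of type $(n-f)^{n+f+2}$ on $n^2-f$ points. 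The main obstacle will be the extremal analysis: controlling exactly how many deficient parallel classes occur and proving that in the equality case their deficiencies are forced to be equal and their uncovered points align into a clean group divisible design rather than some messier configuration; the covering inequalities alone give the bound, but pinning down the unique extremal structure requires tracking the second-order terms in the counting and ruling out mixed deficiency patterns, which is where the precise arithmetic of $f(f+1)=2n$ must be used rather than the cruder $f>\sqrt{2n}-1$ estimate.
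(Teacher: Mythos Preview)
Your reduction of the first bound $|\mc{A}|\le\max(g(n)-1,\,n^2-\sqrt{n})$ to Theorem~\ref{pap_n_minus_root_n} is correct and matches the paper. The gap is in the second and third parts.

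Your plan is to stay with the local argument of Theorem~\ref{pap_n_minus_root_n}: a point $P$ of valency $n$, two parallel classes $\mc{L}_1,\mc{L}_2$ missing $P$ with deficiencies $d_1,d_2\le a$, and the covering bound $d_1d_2\ge n$ inside $\mc{M}_P$. This yields $a\ge\sqrt{n}$ and nothing more. Your proposed sharpening to $a(a+1)\ge 2n$ via ``iterating inclusion--exclusion'' over further classes is not carried out, and the suggested constraint $d_1+d_2\le a$ is not available (Lemma~\ref{no_full_pt_properties}(i) only gives each $d_i\le a$). With three or more classes the triple intersections are uncontrolled, so the local picture around a single $P$ does not reach the $f(f+1)=2n$ threshold. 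You also do not explain where the valency-$0$ point and the projective embedding come from in the strict case, nor why the extremal configuration must have exactly $n+f+2$ ``groups'' of size $n-f$.

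The paper takes a genuinely different, global route. It builds an auxiliary structure $\mc{I}'$ by adjoining one new point per parallel class, reusing the $e$ valency-$0$ points of $\mc{I}$ for $e$ of them, so that in $\mc{I}'$ every two lines meet and every valency $d_P$ satisfies $n-f_0\le d_P\le n$. The exact identities $\sum d_P=|\mc{A}|(n+1)$ and $\sum d_P(d_P-1)=|\mc{A}|(|\mc{A}|-1)$ then combine with $\sum(n-d_P)(d_P-(n-f_0))\ge 0$ to give
\[
2n+(n-f_0)\bigl((k-e)-(n+2)\bigr)\le f_0(f_0+1),
\]
where $k$ is the number of parallel classes. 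The dichotomy drops out: if $k-e\le n+1$ then $\mc{I}'$ has at most $n^2+n+1$ points and Theorem~\ref{DOW bound} embeds it in a projective plane, while $k\ge n+2$ forces $e\ge 1$; if $k-e\ge n+2$ the displayed inequality yields $f_0(f_0+1)\ge 2n$. In the equality case every $d_P\in\{n-f_0,n\}$, each line of $\mc{I}'$ carries exactly one low-valency point, and restricting to the $n^2-f_0$ points of valency $n$ produces (dually) the claimed $n$-group divisible design of type $(n-f)^{n+f+2}$. None of this structure is visible from the covering argument around a single point; the construction of $\mc{I}'$ and the variance-type inequality on the $d_P$ are the missing ideas.
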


\begin{proof}Suppose that $\mc{I} = (\mc{S},\mc{A})$ is a partial affine plane of order $n$ that cannot be extended to an affine plane and suppose that parallelism is an equivalence relation.
    We may assume that $|\mc{A}| > g(n)-1$ and need to show that $|\mc{A}|\leq n^2-f$, where $f$ is the positive real number satisfying $f(f+1)=2n$. Let $f_0:=n^2-|\mc{A}|$.
    
    Let the number of parallel classes be $k$. Then $k\geq n+2$, or otherwise $\mc{I}$ can be extended to an affine plane by Lemma \ref{affine_to_proj_completion}.

    If there is a point of valency $n+1$, then by Lemma \ref{n+1_pt_gives_n+1_parallel_classes} the number of parallel classes is $n+1$, a contradiction. Hence the valency of every point is at most $n$.

    Let $e$ be the number of points of valency $0$. Then since the remaining $n^2-e$ points have at most valency $n$, the number of lines is at most $\frac{n(n^2-e)}{n}$. Therefore $n^2-f_0=|\mc{A}|\leq n^2-e$, so $e\leq f_0$. Hence, since $f_0\leq n+2$, it follows that $e\leq k$.

    Now let $\mc{I'} = (\mc{S'}, \mc{A'})$ be the point-line incidence structure obtained from $\mc{I}$ by adding for each parallel class $\pi$ a new point to the lines of $\pi$. If there are points of valency $0$ in $\mc{I}$, then use these instead of a new point for $e$ of the parallel classes (this can be done since $e\leq k$). We see that $\mc{I'}$ has $|\mc{S}'|=n^2+k-e$ points, $|\mc{A}'|=|\mc{A}|$ lines, each line of $\mc{I'}$ contains $n+1$ points, and every two distinct lines of $\mc{I'}$ meet in a unique point. Furthermore every point of $\mc{I'}$ lies on at least one line.

    For every point $P$ of $\mc{I'}$ let $d_P$ be the number of lines of $\mc{I'}$ containing $P$. Standard double counting arguments show that

    \begin{equation}\label{eqn1}
        \sum_{P\in\mc{S'}}d_P = |\mc{A}'|(n+1)
    \end{equation}

    and
    
    \begin{equation}\label{eqn2}
        \sum_{P\in\mc{S'}}d_P(d_P-1) = |\mc{A}'|(|\mc{A}'|-1).
    \end{equation}

    It follows from the construction of $\mc{I'}$ that $1\leq d_P \leq n$ for all points $P$.

    Next we show that $d_P \geq n-f_0$. Since $d_P\geq 1$, there is a line $l$ through $P$. As every line meets $l$, the total number of lines (which is $n^2-f_0$) is $1+\sum_{X\in l}(d_X-1)$. Using $d_X\leq n$ for the points $X\neq P$ of $l$, it follows that $d_P\geq n-f_0$.

    We have shown that $(n-d_P)(d_P-(n-f_0)) \geq 0$ for all points $P$. It follows that

    \begin{align*}
        0 &\leq \sum_{P\in\mc{S'}}(n-d_P)(d_P-(n-f_0))\\
        &= -\sum_{P\in\mc{S'}}(d_P(d_P-1) + (2n-f_0-1)\sum_{P\in\mc{S'}}d_P - |\mc{S}'|n(n-f_0).
    \end{align*}

    Using equations \eqref{eqn1}, \eqref{eqn2}, and $|\mc{S}'| = n^2+k-e$, it follows that

    \begin{equation*}
        0\leq n((f_0-n)(k-e) + n^2 -f_0n +f_0^2-f_0).
    \end{equation*}

    Dividing by $n$, the resulting inequality is equivalent to

    \begin{equation}
        2n+(n-f_0)((k-e)-(n+2))\leq f_0(f_0+1).\label{eq:end}
    \end{equation}

    Now suppose that $k-e \leq n+1$. Then $\mc{I'}$ has at most $n^2+n+1$ points, so by adding empty points (i.e., points of valency $0$) and using Theorem \ref{DOW bound}, $\mc{I'}$ can be embedded in a projective plane of order $n$, and hence $\mc{I}$ can be embedded in a projective plane of order $n$. Additionally, since $k\geq n+2$, there is an empty point in $\mc{I}$. 

    Now suppose that $k-e\geq n+2$. If $f_0<n$, then Equation \eqref{eq:end} shows that $f_0^2+f_0\geq 2n$. If $f_0\geq n$, then clearly $f_0(f_0+1) > 2n$. Hence $f_0(f_0+1)\geq 2n$, and $f_0\geq f$, where $f$ is the positive real number such that $f(f+1)=2n$.

    Now assume that $k-e\geq n+2$ and $f_0(f_0+1)=2n$. That is, $f_0=f$. Then $f_0< n$ so it follows from Equation \eqref{eq:end} that $k-e=n+2$. Moreover, it folllows that for all $P$,  $(n-d_P)(d_P-(n-f_0))=0$, that is, $d_P\in\{n-f, n\}$ for all points $P$. Hence, if $l$ is a line, then $|\mc{A}'|=n^2-f_0$ implies that one point $P$ of $l$ satisfies $d_P=n-f_0$, whereas $d_X=n$ for all other points $X$ of $l$ (since every line meets $l$ in $\mc{I'}$).

    As $|\mc{A}'|=n^2-f_0=(n+f_0+2)(n-f_0)$, and exactly one point of each line lies on $n-f_0$ lines, it follows that $n+f_0+2$ points of $\mc{I'}$ satisfy $d_P=n-f_0$, and hence $|\mc{S}'|-(n+f+2)=n^2-f$ points of $\mc{I'}$ lie on $n$ lines. Let $\mc{W}$ be the set of these $n^2-f_0$ points. Then every line of $|\mc{A}'|$ has $n$ points in $\mc{W}$, so $\mc{I'}$ induces on $\mc{W}$ an incidence structure $\mc{G}$ with $n^2-f_0$ points, $n^2-f_0$ lines, every point lies on $n$ lines, and every line has $n$ points. Moreover, two lines of $\mc{G}$ have no point in common if and only if these two lines meet in $\mc{I'}$ in a point of $\mc{S'}\setminus \mc{W}$. Hence each point of $\mc{S'}\setminus\mc{W}$ gives rise to a set of $n-f_0$ mutually skew lines of $\mc{G}$. If follows that the structure dual to $\mc{G}$ is an $n$-group divisible design of type $(n-f_0)^{n+f_0+2}$ with $n^2-f_0$ points and lines.
    
\end{proof}

\section{A Completion Result for Inversive Spaces}\label{section3}

In this section we give conditions under which a partial $3$-$(n^d+1,n+1,1)$-design, whose derived structures can be completed, can be completed to a full design. 
Suppose that $\mc{I}_P$ is a partial $2$-$(n^d,n,1)$-design that can be completed to a full $2$-$(n^d,n,1)$-design $\mc{I}'_P$. Then an \tit{added line} is a line in $\mc{I}'_P$ that is not present in $\mc{I}_P$. A point in $\mc{I}'_P$ is a \tit{simple point} if it has exactly one added line through it in $\mc{I}'_P$, that is if it has valency $\frac{n^d-1}{n-1}-1$ in $\mc{I}_P$.


\begin{lemma}\label{simple_point_gives_line}
    Let $\mc{I}$ be a partial $3$-$(n^d+1,n+1,1)$-design with $n,d\geq 2$. Suppose that for every point $P$, $\mc{I}_P$ can be uniquely extended to a full $2$-$(n^d,n,1)$-design $\mc{I}'_P$. 
  Suppose also that there is a point $P$ such that $\mc{I}'_P$ contains an added line $\ell$ containing a simple point $Q$, then
    \begin{itemize}
    \item[(i)] $P$ is a simple point in $\mc{I}'_Q$ and $\{P\} \cup (\ell\setminus \{Q\})$ is an added line in $\mc{I}'_Q$;
    \item[(ii)] for every point $R\in \ell$, every point of $\ell\setminus\{Q,R\}$ is joined to $Q$ by an added line in $\mc{I}'_R$. \end{itemize}
\end{lemma}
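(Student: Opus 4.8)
The plan is to exploit the symmetry of the three‑point incidence relation across the derived structures. The key observation is that for three distinct points $A,B,C$ of $\mc{I}$ there is a circle through $A,B,C$ if and only if $B$ and $C$ are joined in $\mc{I}_A$, if and only if $A$ and $C$ are joined in $\mc{I}_B$, since a line of $\mc{I}_A$ through $B,C$ is precisely a circle through $A,B,C$ with $A$ deleted. In particular the valency of $B$ in $\mc{I}_A$ equals the number of circles through $A$ and $B$, a quantity symmetric in $A$ and $B$. A second ingredient, used repeatedly, is that the unique added line through a simple point is forced: a point of valency $\frac{n^d-1}{n-1}-1$ in $\mc{I}_P$ is joined to exactly $n^d-n$ of the other $n^d-1$ points, hence unjoined to exactly $n-1$ of them; since the points on any added line are pairwise unjoined in $\mc{I}_P$ (two joined points already lie on a common line, and $\lambda=1$ forbids a second line through them in the completion), the single added line through a simple point $Q$ must be $\{Q\}\cup\{T:T\text{ unjoined to }Q\text{ in }\mc{I}_P\}$.

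For part (i) I would first apply the valency symmetry: since $Q$ is simple in $\mc{I}'_P$ it has valency $\frac{n^d-1}{n-1}-1$ in $\mc{I}_P$, so $P$ has the same valency in $\mc{I}_Q$ and is therefore a simple point of $\mc{I}'_Q$. By the forced‑line observation applied in $\mc{I}'_Q$, the unique added line through $P$ is $\{P\}\cup\{T:T\text{ unjoined to }P\text{ in }\mc{I}_Q\}$. It then remains to identify the two unjoined‑point sets: for $T\neq P,Q$, the point $T$ is unjoined to $P$ in $\mc{I}_Q$ iff there is no circle through $P,Q,T$ iff $T$ is unjoined to $Q$ in $\mc{I}_P$, and the latter set is exactly $\ell\setminus\{Q\}$ by the forced description of $\ell$. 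Hence the added line through $P$ in $\mc{I}'_Q$ equals $\{P\}\cup(\ell\setminus\{Q\})$, which is (i).

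For part (ii) I would feed (i) back in. The set $m:=\{P\}\cup(\ell\setminus\{Q\})$ is an added line of $\mc{I}'_Q$, so its points are pairwise unjoined in $\mc{I}_Q$; in particular, for any two distinct points $S,S'$ of $\ell\setminus\{Q\}$ there is no circle through $Q,S,S'$. Now fix $R\in\ell$ with $R\neq Q$ and any $T\in\ell\setminus\{Q,R\}$ (note $P\notin\ell$, so $R,T\in\ell\setminus\{Q\}$). Taking $\{S,S'\}=\{R,T\}$ gives no circle through $Q,R,T$, i.e. $Q$ and $T$ are unjoined in $\mc{I}_R$; since $\mc{I}'_R$ is a full design, $Q$ and $T$ lie on a unique common line there, which, being absent from $\mc{I}_R$, is an added line. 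This is exactly (ii).

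The routine steps are the double count that pins down the $n-1$ unjoined points of a simple point and the bookkeeping of which point belongs to which derived structure. The only genuine subtlety, and the step I would be most careful about, is the set identification in (i): one must verify that ``$T$ unjoined to $P$ in $\mc{I}_Q$'' and ``$T$ unjoined to $Q$ in $\mc{I}_P$'' cut out the same $n-1$ points, using both the symmetry of the three‑point relation and the fact that $\ell$ is forced to consist of $Q$ together with all points unjoined to $Q$. (The degenerate case $R=Q$ of (ii) is not meaningful, since $Q$ is not a point of $\mc{I}_Q$, and is in any case subsumed by (i).)
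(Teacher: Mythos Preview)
Your argument is correct and follows essentially the same route as the paper. Both proofs hinge on the symmetry ``$T$ unjoined to $Q$ in $\mc{I}_P$ iff there is no circle through $P,Q,T$ iff $T$ unjoined to $P$ in $\mc{I}_Q$'' to pin down the set $\ell\setminus\{Q\}$ as exactly the points unjoined to $P$ in $\mc{I}_Q$, and then use for (ii) that any two points of the added line $\{P\}\cup(\ell\setminus\{Q\})$ in $\mc{I}'_Q$ are unjoined in $\mc{I}_Q$; the only difference is that you first derive simplicity of $P$ from the valency symmetry and then identify the forced added line, whereas the paper first identifies the unjoined set and reads off simplicity from its cardinality.
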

\begin{proof}
\begin{itemize}
\item[(i)]
    It is clear that the only triples of points containing $P$ and $Q$ which do not lie on a circle in $\mc{I}$ are those of the form $(P,Q,R)$ where $R\in \ell\setminus \{Q\}$. Hence $P$ is unjoined to only the points of $\ell\setminus \{Q\}$ in $\mc{I}_Q$. Therefore $P$ is a simple point in $\mc{I}'_Q$ and $\{P\} \cup \ell\setminus \{Q\}$ is an added line in $\mc{I}'_Q$.
    \item[(ii)] It follows from (i) that for all $T\in \ell\setminus\{Q,R\}$, the triple of points $(Q, R, T)$ is not contained in a circle of $\mc{I}$. Hence $Q$ and $T$ are joined by an added line in $\mc{I}'_R$.\qedhere
    \end{itemize}
\end{proof}


\begin{theorem}\label{inversive_completion}
    Let $\mc{I}$ be a partial $3$-$(n^d+1,n+1,1)$-design with $n,d\geq 2$. Suppose that for every point $P$, $\mc{I}_P$ can be uniquely extended to a full $2$-$(n^d,n,1)$-design $\mc{I}'_P$, and that in $\mc{I}'_P$ one of the following holds: \begin{itemize}
    \item[(i)] every added line in $\mc{I}'_P$ contains at least $\sqrt{n}$ simple points and through every point of $\mc{I}'_P$ there are at most $\sqrt{n}$ added lines of $\mc{I}'_P$, or \item[(ii)] there is at most one point in $\mc{I}'_P$ with more than one added line of $\mc{I}'_P$ through it. \end{itemize}
    Then $\mc{I}$ can be uniquely completed to a full $3$-$(n^d+1,n+1,1)$-design.
    
\end{theorem}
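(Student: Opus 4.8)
The plan is to build the missing circles explicitly and to reduce everything to a single \emph{consistency} statement. For a point $P$ and an added line $\ell$ of $\mc{I}'_P$, the set $C:=\{P\}\cup\ell$ has $n+1$ points and is the natural candidate for a new circle. I would first isolate the claim: \emph{for every $X\in C$, the set $C\setminus\{X\}$ is an added line of $\mc{I}'_X$.} Granting this, let $\mc{C}^{+}$ be the family of all such sets $C$. Each $C$ is then obtained in the same way from every one of its points, so $\mc{C}^{+}$ is well defined; if two members shared three points $A,B,D$, then $C_1\setminus\{A\}$ and $C_2\setminus\{A\}$ would be two lines of the design $\mc{I}'_A$ through $B$ and $D$, hence equal, giving $C_1=C_2$. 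Similarly, if some $C\in\mc{C}^{+}$ met an original circle $K$ in three points $A,B,D$ (here $P\notin K$, since two points of $\ell$ are unjoined in $\mc{I}_P$), then $C\setminus\{A\}$ and $K\setminus\{A\}$ would be an added and an original line of $\mc{I}'_A$ through $B,D$, a contradiction. Thus new circles meet one another and every original circle in at most two points.

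With these facts the design axioms are routine: a triple covered in $\mc{I}$ lies in its unique original circle and in no new circle, while an uncovered triple $\{A,B,D\}$ lies on exactly the circle $\{A\}\cup\ell_A$, where $\ell_A$ is the unique added line of $\mc{I}'_A$ through $B$ and $D$. Hence adjoining $\mc{C}^{+}$ to the circles of $\mc{I}$ produces a $3$-$(n^d+1,n+1,1)$-design. Uniqueness is immediate: any completion of $\mc{I}$ induces on each derived structure a completion of $\mc{I}_P$, which must be the unique $\mc{I}'_P$, so its new circles through $P$ are forced to be $\{P\}\cup\ell$ for the added lines $\ell$, that is, exactly $\mc{C}^{+}$.

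Everything therefore rests on the consistency claim, which I would prove by growing the set $S:=\{X\in C:\ C\setminus\{X\}\text{ is an added line of }\mc{I}'_X\}$. Trivially $P\in S$, and Lemma \ref{simple_point_gives_line}(i) shows that every simple point of $\ell$ lies in $S$. The engine is the growth rule: if $X\in S$ and $Y\in C\setminus\{X\}$ is a \emph{simple} point of $\mc{I}'_X$, then applying Lemma \ref{simple_point_gives_line}(i) inside $\mc{I}'_X$ to the added line $C\setminus\{X\}$ and the simple point $Y$ yields that $C\setminus\{Y\}=\{X\}\cup((C\setminus\{X\})\setminus\{Y\})$ is an added line of $\mc{I}'_Y$, so $Y\in S$. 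Note also that whenever one endpoint of a pair $\{U,V\}\subseteq C$ lies in $S$, the triple $\{U,V,Y\}$ is uncovered for the remaining point $Y$, so $U$ and $V$ are newly joined in $\mc{I}'_Y$. Under hypothesis (ii) this finishes quickly: $\ell$ has at most one non-simple point, so $S$ omits at most one point $Y$; every pair of $C\setminus\{Y\}$ then has an endpoint in $S$ and is newly joined in $\mc{I}'_Y$, and since $\mc{I}'_Y$ has at most one non-simple point, some $W\in C\setminus\{Y\}$ is simple there, forcing the $n-1$ points newly joined to $W$ onto its unique added line, which must equal $C\setminus\{Y\}$; hence $S=C$.

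The main obstacle is hypothesis (i), where $\ell$ may carry many non-simple points, so $S$ need not be almost all of $C$. For a non-simple $Y\in\ell$, I would use Lemma \ref{simple_point_gives_line}(ii): for each simple point $Q$ of $\ell$, the point $Q$ is joined by added lines of $\mc{I}'_Y$ to every point of $\ell\setminus\{Q,Y\}$, and also to $P$. Consequently, if some such $Q$ is simple in $\mc{I}'_Y$, its unique added line of $\mc{I}'_Y$ contains $\{P\}\cup(\ell\setminus\{Y\})=C\setminus\{Y\}$, which by cardinality equals that line, giving $Y\in S$. It thus suffices to find, among the at least $\sqrt{n}$ simple points of $\ell$, one that is simple in $\mc{I}'_Y$. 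This is the crux: assuming all of them are non-simple in $\mc{I}'_Y$ (so each lies on at least two added lines there), I would derive a contradiction by double counting the incidences between these $\geq\sqrt{n}$ points and the at most $\sqrt{n}$ added lines through $P$ in $\mc{I}'_Y$, using that every added line of $\mc{I}'_Y$ carries at least $\sqrt{n}$ simple points. Balancing these two bounds from hypothesis (i) is exactly where the threshold $\sqrt{n}$ is needed and is the delicate step of the argument.
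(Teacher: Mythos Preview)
Your global architecture is exactly the paper's: reduce everything to the \emph{consistency claim} that $C\setminus\{X\}$ is an added line of $\mc{I}'_X$ for every $X\in C=\{P\}\cup\ell$, and then check the $3$-design axioms and uniqueness as you describe. That part is fine and matches the paper almost verbatim. Your treatment of hypothesis (ii) is also essentially correct, although note that the theorem allows different derived structures to satisfy different hypotheses; the paper resolves this by case-splitting on the hypothesis satisfied by the \emph{target} structure $\mc{I}'_Y$, not on $\mc{I}'_P$, which is cleaner than the implicit ``both satisfy (ii)'' reading of your paragraph.

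The genuine gap is in your case (i). Your strategy is to find, among the $\ge\sqrt{n}$ simple-in-$\mc{I}'_P$ points $Q_i$ of $\ell$, one that is also simple in $\mc{I}'_Y$; then its unique added line in $\mc{I}'_Y$ would be forced to equal $C\setminus\{Y\}$. But the double count you sketch does not produce a contradiction, and in fact the auxiliary claim need not hold. Each $Q_i$ lies on exactly one of the $\le\sqrt{n}$ added lines through $P$ in $\mc{I}'_Y$, and each such line carries $\ge\sqrt{n}$ simple-in-$\mc{I}'_Y$ points; but nothing prevents those simple points from being disjoint from $\{Q_1,\dots,Q_t\}$. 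Concretely, with $n=9$ one can have three $Q_i$'s distributed over three added lines through $P$, each $Q_i$ lying on a second added line, while the three required simple points on each line are taken among the other points of $\ell\setminus\{Y\}$. No inequality is violated.

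The paper's argument for case (i) sidesteps this. Working in $\mc{I}'_Y$, it first pigeonholes the $n-1$ points of $\mc{T}=\ell\setminus\{Y\}$ onto the $\le\sqrt{n}$ added lines through $P$ to obtain one line $m$ with at least $\lceil (n-1)/\sqrt{n}\rceil\ge\sqrt{n}$ points of $\mc{T}$. It then shows $\mc{T}\subseteq m$ by contradiction: if some $R\in\mc{T}\setminus m$, then, by Lemma \ref{simple_point_gives_line}(ii) applied to each simple-in-$\mc{I}'_P$ point of $\ell$, the point $R$ is joined in $\mc{I}'_Y$ by added lines to $P$ and to every simple point of $\ell$; since those simple points (there are $\ge\sqrt{n}$ of them, as $Y$ is non-simple) all lie on $m$ and $R\notin m$, these are $\ge\sqrt{n}+1$ distinct added lines through $R$, contradicting the bound in (i). The key difference from your plan is that the contradiction is obtained at an arbitrary point $R$ \emph{off} the big line $m$, not at one of the $Q_i$'s, and the role of the ``$\ge\sqrt{n}$ simple points on $\ell$'' is to provide many distinct targets on $m$, not to guarantee simplicity in $\mc{I}'_Y$.
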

\begin{proof}
    Let $P$ be a point of $\mc{I}$ and let $\ell$ be an added line in $\mc{I}'_P$. Let $Q$ be a point of $\ell$. Let $\mc{T} = \ell\setminus\{Q\}$. We will show that $\{P\}\cup \mc{T}$ is an added line in $\mc{I}'_Q$. 
    
    First consider the case that $\mc{I}'_Q$ has property (i).
    Let $T\in \mc{T}$, then, since there is no circle through $P,Q,T$ in $\mc{I}$, 
    $P$ and $T$ are unjoined in $\mathcal{I}_Q$. So each of the $n-1$ points of $\mc{T}$ lies on one of the added lines, say $m_1,\ldots,m_s$ through $P$ in $\mathcal{I}'_Q$.
    Since $s\leq \sqrt{n}$, there is at least one of $m_1,\ldots, m_s$, say $m=m_1$, with at  least $\lceil\frac{n-1}{\sqrt{n}}\rceil$ points of $\mc{T}$.

    We now show that every point of $\mc{T}$ is contained in $m$, that is $m=\mc{T}\cup\{P\}$. 
    
    If $Q$ is a simple point in $\mc{I}'_P$ then $m=\mc{T}\cup\{P\}$ by Lemma \ref{simple_point_gives_line}(i), so we may suppose that $Q$ is not a simple point in $\mc{I}'_P$.

    In this case, first suppose that there is some point $R\in \mc{T}$ that is not in $m$, and that $R$ is a simple point in $\mc{I}'_P$. Note that there is no circle through $P,Q,R$ in $\mathcal{I}$, and therefore, $R$ and $P$ are unjoined in $\mathcal{I}_Q$, so in $\mc{I}'_Q$, $R$ is joined to $P$ by an added line. By Lemma \ref{simple_point_gives_line}(ii) $R$ is joined to every point of $\mc{T}$ by an added line in $\mc{I}'_Q$, in particular the points of $\mc{T}$ that lie in $m$. Hence there are at least $\lceil\frac{n-1}{\sqrt{n}}\rceil + 1 \geq \sqrt{n}+1$ added lines through $R$ in $\mc{I}'_Q$, a contradiction.

    We may now assume that every simple point in $\mc{I}'_P$ contained in $\mc{T}$  lies on $m$ in $\mc{I}'_Q$, and that there is some point $R\in \mathcal{T}$ that does not lie in $m$. Then in $\mc{I}'_Q$, $R$ is joined to $P$ by an added line in $\mc{I}'_Q$, and by Lemma \ref{simple_point_gives_line}(ii) $R$ is joined to each of these simple points by an added line. Hence there are at least $\sqrt{n}+1$ added lines through $R$ in $\mc{I}'_Q$, again a contradiction.

    Now consider the case that $\mc{I}'_Q$ has property (ii). Then by Lemma \ref{simple_point_gives_line}(i), $Q$ is a simple point in $\mc{I}'_P$ if and only if $P$ is a simple point in $\mc{I}'_Q$. In this case, $\{P\}\cup \mc{T}$ is an added line in $\mc{I}'_Q$ as required. So we may now assume that $P$ is the  unique point in $\mc{I}'_Q$ through which there is more than one added line. Suppose that $\{P\}\cup \mc{T}$ is not an added line in $\mc{I}'_Q$. Then there is some $A,B\in \mc{T}$ such that $A$ is simple in $\mc{I}'_P$ and $A$ and $B$ lie on different added lines through $P$ in $\mc{I}'_Q$. Then by Lemma \ref{simple_point_gives_line}(ii) there is an added line joining $A$ and $B$ in $\mc{I}'_Q$, hence $A$ has multiple added lines through it in $\mc{I}'_Q$, a contradiction since $A\neq P$.  

    Hence for all points $P$ and all added lines $\ell$ in $\mc{I}'_P$, $\{P\}\cup \ell\setminus\{Q\}$ is an added line in $\mc{I}'_Q$ for all $Q\in \ell$.  Consider now the incidence structure $\mathcal{J}$ with points the points of $\mathcal{I}$, and circles given by the circles of $\mathcal{I}$, together with all circles of the form $\{P\}\cup \ell$ where $\ell$ is an added line in $\mathcal{I}'_P$, for all points $P$ in $\mc{I}$. Call the latter circles {\em added circles}.
    Consider three points $R_1,R_2,R_3$ of $\mathcal{I}$. If $R_1,R_2,R_3$ are contained in a (necessarily unique) circle of $\mathcal{I}$ then no added circle contains $R_1,R_2,R_3$: this is clear for added circles arising from lines in $\mc{I}_{R_i},$ $i=1,2,3$ so suppose that there is a circle $\{R_4\}\cup \ell$ added, where $\ell$ is an added line through $R_1,R_2,R_3$ in $\mathcal{I}'_{R_4}$. We have shown that this implies that in $\mathcal{I}_{R_1}$, $\{R_4\}\cup\ell$ is an added circle, a contradiction since $\ell$ contains $R_2,R_3$ which were joined in $\mathcal{I}_{R_1}.$
    Now assume that $R_1,R_2,R_3$ do no lie on a circle of $\mathcal{I}.$ Then $R_1,R_2,R_3$ lie on a circle $C$ of $\mathcal{J}$ of the form $\{R_1\}\cup \ell$ where $\ell$ is an added line in $\mc{I}_{R_1}$, and we can again see that any added circle of the form $\{R_4\}\cup m$ where $m$ is an added line in $\mathcal{I}_{R_4}$ coincides with $C$ (so $R_4$ is contained in $C$).
    Hence,  $\mc{I}$ can be completed to an inversive plane $\mathcal{J}$.
\end{proof}

\begin{corollary}\label{completion_of_higher_dim_3design}
    Let $\mc{I}$ be a partial $3$-$(n^d+1,n+1,1)$-design with $d\geq 3$, $n\geq 2$. Suppose that in every derived partial $2$-design there is at most one point with valency less than $\frac{n^d-1}{n-1}-1$ and for every line $\ell$ and point $P\notin \ell$ there are at most $\frac{n^d-1}{n-1}-n$ lines through $P$ parallel to $\ell$. Then $\mc{I}$ can be completed to a full $3$-$(n^d+1,n+1,1)$-design.
\end{corollary}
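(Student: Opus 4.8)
The plan is to deduce this corollary directly from Lemma~\ref{no_pts_lss_than_n_lines} and Theorem~\ref{inversive_completion}, so the real work is to verify that the two hypotheses of the corollary supply exactly what these results require. First I would fix a point $P$ and examine its derived structure $\mc{I}_P$, which is a partial $2$-$(n^d,n,1)$-design. The corollary assumes that in $\mc{I}_P$ there is at most one point of valency less than $\frac{n^d-1}{n-1}-1$, and that for every line $\ell$ and point $X\notin\ell$ at most $\frac{n^d-1}{n-1}-n$ lines through $X$ are parallel to $\ell$; since $X\notin\ell$, being parallel to $\ell$ is the same as missing $\ell$. These are precisely the hypotheses of Lemma~\ref{no_pts_lss_than_n_lines}, so that lemma completes each $\mc{I}_P$ to a full $2$-$(n^d,n,1)$-design $\mc{I}'_P$.

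The step requiring care is that Theorem~\ref{inversive_completion} demands that each $\mc{I}_P$ be \emph{uniquely} extendable, whereas Lemma~\ref{no_pts_lss_than_n_lines} asserts only existence; I would therefore argue next that the completion is forced. For a simple point $Q$ of $\mc{I}_P$ (one of valency exactly $\frac{n^d-1}{n-1}-1$), Lemma~\ref{valn_acts_as_line} shows that the points unjoined to $Q$ are pairwise unjoined, and there are exactly $n-1$ of them; since any completion must join $Q$ to all of them using a single new line of size $n$, the added line through $Q$ is forced to be $\{Q\}\cup\{T : T\text{ unjoined to }Q\}$. Because there is at most one point of valency below $\frac{n^d-1}{n-1}-1$, every unjoined pair contains at least one simple point, so all added lines of $\mc{I}'_P$ — including those through the possible exceptional point — are determined in this way, and the extension $\mc{I}'_P$ is unique.

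Finally I would match the corollary's valency hypothesis to condition~(ii) of Theorem~\ref{inversive_completion}. A point carries more than one added line of $\mc{I}'_P$ exactly when its valency in $\mc{I}_P$ is strictly less than $\frac{n^d-1}{n-1}-1$; by assumption there is at most one such point in each $\mc{I}_P$, so condition~(ii) holds for every $P$. Theorem~\ref{inversive_completion} then completes $\mc{I}$ to a full $3$-$(n^d+1,n+1,1)$-design, as required. I expect the uniqueness bookkeeping of the previous paragraph to be the only genuine obstacle; everything else is a direct substitution of the corollary's hypotheses into the cited results, and the restriction $d\geq 3$ plays no role in the argument beyond fixing the regime of interest.
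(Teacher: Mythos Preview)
Your proposal is correct and follows essentially the same route as the paper: apply Lemma~\ref{no_pts_lss_than_n_lines} to complete each derived structure, then invoke Theorem~\ref{inversive_completion} via condition~(ii). Your added paragraph verifying uniqueness of the completion is a welcome improvement, since the paper's one-line proof tacitly assumes this while Theorem~\ref{inversive_completion} explicitly requires it.
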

\begin{proof}
    Every derived partial $2$-design satisfies the conditions of Lemma \ref{no_pts_lss_than_n_lines}, and so can be completed to a $2$-design satisfying condition (ii) of Theorem \ref{inversive_completion}.
\end{proof}

\begin{corollary}\label{root_n_completion_of_inversive_plane}
    Let $\mc{I}$ be a partial inversive plane, i.e. a $3$-$(n^2+1,n+1,1)$-design. Suppose that for every point $P$, either
    
    \begin{enumerate}[label=(\roman*)]
        \item There are at least $n^2+n-\sqrt{n}$ circles through $P$ and parallelism is an equivalence relation in $\mc{I}_P$.
        \item There is at most one point in $\mc{I}_P$ of valency less than $n$ and parallelism is an equivalence relation in $\mc{I}_P$.
        \item $\mc{I}_P$ satisfies the requirements of Theorem \ref{DOW_pap_root_n_no_parallel} or Theorem \ref{DOW_pap_with_nothing_2_missing}.
    \end{enumerate}
    Then $\mc{I}$ can be completed to an inversive plane, i.e. a $3-(n^2+1,n+1,1)$-design.
\end{corollary}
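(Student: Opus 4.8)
The plan is to deduce the corollary from Theorem \ref{inversive_completion} applied with $d=2$, noting that a full $2$-$(n^2,n,1)$-design is exactly an affine plane. So for each point $P$ I must establish two things: that the derived partial affine plane $\mc{I}_P$ admits a \emph{unique} completion to an affine plane $\mc{I}'_P$, and that this $\mc{I}'_P$ satisfies condition (i) or (ii) of Theorem \ref{inversive_completion}. I would treat the three hypotheses on $P$ in turn, in each case first invoking an existing completion result and then reading off the configuration of added lines. Throughout, a \emph{simple} point is one of valency $n=\frac{n^2-1}{n-1}-1$ in $\mc{I}_P$, equivalently a point lying on exactly one added line, while a point of valency less than $n$ is exactly one lying on more than one added line.

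For existence, and for identifying the correct condition of Theorem \ref{inversive_completion}: under hypothesis (i), $\mc{I}_P$ has $b\geq n^2+n-\sqrt n>n^2$ lines and parallelism is an equivalence relation, so Theorem \ref{n2-1_completable} completes it. Writing $e=(n^2+n)-b\leq\sqrt n$ for the number of added lines, through each point there are at most $e\leq\sqrt n$ added lines, while any added line meets each of the other $e-1$ added lines in at most one point and hence carries at least $n-e+1\geq\sqrt n$ simple points; this is condition (i). Under hypothesis (ii), since parallelism is an equivalence relation, Lemma \ref{oneparallel} supplies the parallel-line bound needed for $d=2$, so Lemma \ref{no_pts_lss_than_n_lines} completes $\mc{I}_P$; and as there is at most one point on more than one added line, condition (ii) holds directly.

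Hypothesis (iii) is the delicate case, since here parallelism is not assumed to be an equivalence relation. Existence of the completion is given by Theorem \ref{DOW_pap_root_n_no_parallel} or Theorem \ref{DOW_pap_with_nothing_2_missing}, and I would then pin down the forced position of the added lines. If $\mc{I}_P$ has a point of valency $n+1-e$, then, being short exactly $e$ lines, it lies on all $e$ added lines, so every other point lies on at most one added line. If instead some line consists only of points of valency $n+1$, then no added line can meet that line, which forces every added line into its parallel class, so the added lines are pairwise disjoint; the cases $e\in\{1,2\}$ are handled the same way. In every subcase at most one point lies on more than one added line, so $\mc{I}'_P$ satisfies condition (ii).

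It remains to secure uniqueness uniformly. The key observation is that a simple point $Q$ lies on exactly one added line in any completion $\mc{I}'_P$; since in $\mc{I}'_P$ the $n+1$ lines through $Q$ partition the other $n^2-1$ points, and $Q$ is joined in $\mc{I}_P$ to all but its $n-1$ unjoined points, that added line must be $\{Q\}\cup\{X:X\text{ is unjoined to }Q\text{ in }\mc{I}_P\}$, a set determined by $\mc{I}_P$ alone. (This is the mechanism behind Lemma \ref{valn_acts_as_line}; I would verify it directly here so as not to need parallelism to be an equivalence relation, which fails in case (iii).) In each of the three cases the analysis above shows that every added line contains a simple point (at least $n-e+1$, respectively at least $n-1$, of them), so the whole set of added lines is intrinsic to $\mc{I}_P$ and the completion is unique. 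With uniqueness and the appropriate condition of Theorem \ref{inversive_completion} verified for every $P$, that theorem yields the completion of $\mc{I}$ to an inversive plane. I expect the main obstacle to be the case-(iii) bookkeeping: confirming that the Dow hypotheses genuinely force the added lines to be concurrent or mutually parallel, and checking the small-$e$ arithmetic (such as $n-e+1\geq\sqrt n$ and $e\leq\sqrt n$) so that the selected condition of Theorem \ref{inversive_completion} really holds.
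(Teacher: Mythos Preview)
Your proposal is correct and follows essentially the same route as the paper: verify that each derived structure $\mc{I}_P$ completes to an affine plane satisfying condition (i) or (ii) of Theorem~\ref{inversive_completion}, then invoke that theorem. The paper's own proof is much terser than yours---it cites Theorem~\ref{pap_n_minus_root_n} for cases (i) and (ii), asserts without detail that case (iii) yields condition~(ii) of Theorem~\ref{inversive_completion}, and does not explicitly address the uniqueness hypothesis of that theorem at all. Your choice to invoke Lemma~\ref{no_pts_lss_than_n_lines} directly in case~(ii) (rather than first deducing a line-count bound) is slightly cleaner, and your case-by-case verification for hypothesis~(iii) together with the uniqueness argument via simple points fills gaps the paper leaves implicit; but the underlying strategy is identical.
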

\begin{proof}
    The conditions (i) and (ii) both imply that the number of lines in the derived structure is at least $n^2-1$ (and parallel is an equivalence relation), so by Theorem \ref{pap_n_minus_root_n} these derived structures can be completed; in case (iii) this follows from the mentioned theorems. Then every derived structure can be completed to an affine plane which in case (i) satisfies condition (i) of Theorem \ref{inversive_completion}, and in the other cases satisfies condition (ii).
\end{proof}

\bibliographystyle{abbrv}
\bibliography{refcompletion2}

\end{document}